\numberwithin{equation}{section}
\title {\Large{On the subgroup generated by solutions of Pell's equation}}
\author{Elena C. Covill, Mohammad Javaheri, Nikolai A. Krylov\\ ~ \\
Siena College, Department of Mathematics\\
515 Loudon Road, Loudonville NY 12211\\ ~ \\
ec20covi@siena.edu, mjavaheri@siena.edu, nkrylov@siena.edu
}
\date {}
\begin{document}
\newtheorem{theorem}{Theorem}
\newtheorem{lemma}[theorem]{Lemma}
\newtheorem{cor}[theorem]{Corollary}
\newtheorem{conj}[theorem]{Conjecture}
\newtheorem{prop}[theorem]{Proposition}
\newtheorem{question}{Problem}
\theoremstyle{definition}
\newtheorem{definition}[theorem]{Definition}
\newtheorem{example}[theorem]{Example}
\newtheorem{xca}[theorem]{Exercise}
\theoremstyle{remark}
\newtheorem{remark}[theorem]{Remark}
\numberwithin{equation}{section}

\def\natu           {\mathbb N}
\def\inte 		{\mathbb Z}
\def\real		{\mathbb R}
\def\rati		{\mathbb Q}
\def\P 		{\cal P}
\def\O 		{{\cal O}}
\def\S		{{\cal S}}

\def\lla		{\longleftarrow}
\def\lra		{\longrightarrow}
\def\ra		{\rightarrow}
\def\Ra		{\Rightarrow}
\def\hra		{\hookrightarrow}
\def\lmt		{\longmapsto}
\def\lam            {\lambda}
\def\del		{\delta}
\def\veps		{\varepsilon}
\def\re		{\rangle}
\def\le		{\langle}

\maketitle

\begin{abstract}
Equivalence classes of solutions of the
Diophantine equation $a^2+mb^2=c^2$ form an infinitely generated abelian group $G_m$, where $m$ is a fixed square-free positive integer. Solutions of Pell's equation 
$x^2-my^2=1$ generate a subgroup $P_m$ of $G_m$. We prove that $P_m$ and $G_m/P_m$ have infinite rank for all $m>1$. We also give several examples of $m$ for which $G_m/P_m$ has nontrivial torsion.
\end{abstract}

\noindent {\bf Keywords}: Pell's Equation, Reciprocity Laws, Continued Fractions, Ideal Class Group.
\\
{\bf 2010 Mathematics Subject Classification}: Primary 11D09, Secondary 11R11, 11A55, 11R20.

\section{Introduction}
 Solutions $(a_1,b_1,c_1)$ and $(a_2,b_2,c_2)$ of the Diophantine equation
\begin{equation}
\label{main} a^2 + mb^2 = c^2
\end{equation}
produce another solution under the binary operation:
\begin{equation}
\label{operation} (a_1,b_1,c_1)*(a_2,b_2,c_2)=(a_1a_2 - mb_1b_2, a_1b_2 + a_2b_1,
c_1c_2).
\end{equation}
The operation (1.2) induces an abelian group structure on $G_m$, the set of equivalence classes of solutions of \eqref{main}, as made precise in the following definition (for an alternative definition in terms of  equivalence classes of 
ordered triples of integers, see \cite{Baldisserri, Kulzer}).
\begin{definition}
Let $
F_m=\rati[\sqrt{-m}]= \left \{a+b\sqrt{-m}:~a,b\in \rati \right \}
$
be the quadratic field associated with a square-free positive integer $m$. Let $\S_m$ denote the multiplicative subgroup of $F_m \backslash \{0\}$ consisting of all nonzero elements such that $a^2+mb^2$ is a square of a rational number. We let
$
G_m= \S_m /\rati^*.
$
\end{definition} 
An equivalence class $[a+b\sqrt{-m}]\in G_m$ can 
be represented by a \emph{primitive} triple $[x,y,z]$ with $x^2 + my^2 = z^2$ (a triple $[x,y,z]$ is primitive if 
$(x,y,z)\in\inte\times\inte\times\natu$ and $\gcd(x,y,z)=1$). This representation is unique up to the equivalence $[x,y,z] \sim [-x,-y,z]$. The group operation on $G_m$ induced by the operation in \eqref{operation} can be written as
 \begin{equation}\label{go}
[x,y,z] + [a,b,c] =\left  [\dfrac{xa - myb}{g}, \dfrac{xb+ya}{g}, \dfrac{zc}{g} \right],
\end{equation}
for
$[x,y,z],[a,b,c]\in G_m$ and $g=\gcd(xa-myb,xb+ya,zc)$. 

For $m>1$, the group $G_m$ has been studied by various authors \cite{Baldisserri,Krylov,Kulzer,Lemmermeyer}, who have shown, 
among other results, that $G_m$ is infinitely generated and has nontrivial torsion $\inte/3\inte$ only when $m=3$. 

In this paper, we study the subgroup of $G_m$ generated by the solutions of Pell's equation
\begin{equation}\label{pelleq}
X^2-mY^2=1.
\end{equation}
Let $P_m \subseteq G_m$ denote the subgroup generated by all primitive triples $[1,Y,X]$ such that $(1,Y,X)$ is a solution of \eqref{main}, or equivalently $(X,Y)$ is a solution of \eqref{pelleq}. In Section 2, we show that $P_m$ has infinite rank for all square-free $m>1$ (Proposition \ref{rankpm}). We are interested in determining the rank and torsion of $G_m/P_m$. In Section 3, we prove that $G_m/P_m$ has infinite rank for all square-free $m>1$ (Proposition \ref{quotientrank}). In Section 4, several examples of quotient groups $G_m/P_m$ with nontrivial $2$-torsion are given (Table \ref{table2}) by constructing a homomorphism from $G_m$ to the ideal class group of the imaginary quadratic field $\rati[\sqrt{-m}]$.

\begin{center}
{\bf Acknowledgement}
\end{center}
 
The authors gratefully acknowledge support from the Siena Summer Scholars program that 
funds scholarly activities in which faculty members and students of Siena College collaborate 
during the summer. This article is the result of such collaboration. We also thank Michael Stoll 
for useful comments on an earlier version of this paper and for suggesting a different approach 
to the proof that $G_m/P_m$ has infinite rank.


\section{Preliminary results}

Here and throughout this paper, $m$ is a square-free integer greater than 1. 
The following lemma determines when primitivity of triples is preserved under the group operation \eqref{go} in $G_m$. 

\begin{lemma}\label{primitive}
Let $[a,b,c]$ and $[x,y,z]$ be primitive triples in $G_m$, and $p$ be a prime.
\begin{itemize}
\item[i)] If $p \nmid \gcd(c,z)$, then $p\nmid \gcd(xa - myb, xb+ya, zc)$.
\item[ii)] If $\gcd(xa - myb, xb+ya, zc)=g>1$, then $g=d^2 2^{\epsilon}$, where $d\in \natu$ and $\epsilon\in\{0,1\}$. 
\end{itemize}
\end{lemma}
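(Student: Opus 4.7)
For part (i), my plan is to assume for contradiction that a prime $p$ divides each of $xa-myb$, $xb+ya$, and $zc$, yet $p\nmid\gcd(c,z)$. Without loss of generality $p\nmid z$, so from $p\mid zc$ I conclude $p\mid c$. The two identities
\[
x(xa-myb)+my(xb+ya)=z^2a,\qquad y(xa-myb)-x(xb+ya)=-z^2b,
\]
then force $p\mid a$ and $p\mid b$, contradicting the primitivity of $[a,b,c]$.

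For part (ii), the claim $g=2^{\epsilon}d^{2}$ is equivalent to saying that the odd part of $g$ is a perfect square; that is what I aim to prove. I would first observe that the norm identity
\[
(xa-myb)^2+m(xb+ya)^2=(a^2+mb^2)(x^2+my^2)=c^2z^2
\]
implies $\gcd(xa-myb,xb+ya)\mid cz$, so the gcd $g$ in the statement equals $\gcd(xa-myb,xb+ya)$. From the symmetric identities $a(xa-myb)+mb(xb+ya)=c^2x$ and $b(xa-myb)-a(xb+ya)=-c^2y$, together with $\gcd(x,y)=1$ (which follows from primitivity of $[x,y,z]$), I would deduce $g\mid c^2$, and by symmetry $g\mid z^2$; hence $g\mid\gcd(c,z)^2$, giving the upper bound $v_p(g)\le 2\min(v_p(c),v_p(z))$ for every prime $p$.

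It remains to show that $v_p(g)$ is even for each odd prime $p\mid g$. By part (i), $p\mid c$ and $p\mid z$; and the primitivity of both triples, combined with $m$ being squarefree, rules out $p\mid abxym$. Set $\alpha=v_p(c)$, $\beta=v_p(z)$, and $\gamma=\min(\alpha,\beta)$. Inverting $b$ modulo $p^{2\alpha}$ in $c^2=a^2+mb^2$ produces $u\equiv ab^{-1}$ with $u^2\equiv -m\pmod{p^{2\alpha}}$; analogously $v\equiv xy^{-1}\pmod{p^{2\beta}}$. Since $(u-v)(u+v)\equiv 0\pmod{p^{2\gamma}}$ while $p\nmid 2u$, exactly one of $v_p(u-v)$, $v_p(u+v)$ is zero and the other is $\ge 2\gamma$. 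Direct substitution (using $a\equiv ub$ and $x\equiv vy$ mod $p^{2\gamma}$) gives
\[
xa-myb\equiv yb(uv-m),\qquad xb+ya\equiv yb(u+v)\pmod{p^{2\gamma}}.
\]
In the case $u\equiv v\pmod p$, $uv-m\equiv -2m\not\equiv 0\pmod p$, forcing $p\nmid xa-myb$ and contradicting $p\mid g$. Hence $u\equiv -v\pmod{p^{2\gamma}}$, so both expressions vanish mod $p^{2\gamma}$, giving $v_p(g)\ge 2\gamma$; combined with the upper bound, $v_p(g)=2\gamma$, which is even. Writing $v_2(g)=2k+\epsilon$ with $\epsilon\in\{0,1\}$ then yields $g=2^{\epsilon}d^2$.

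The main obstacle I anticipate is the sharpness of the valuation: the norm identity alone yields only $v_p(g)\le\alpha+\beta$, which is insufficient when $\alpha\ne\beta$, so the refinement $v_p(g)\le 2\min(\alpha,\beta)$ coming from $g\mid c^2$ and $g\mid z^2$ is crucial, as is the $u\equiv\pm v$ case split to obtain a matching lower bound.
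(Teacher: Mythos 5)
Your proof is correct; part (i) is essentially the paper's argument in a slightly streamlined form, while part (ii) takes a genuinely different route. For (i), the paper divides $xa-myb$ and $xb+ya$ by $p$ and combines the quotients to get $p(ta-sb)=yc^2$ and $p(tx-sy)=bz^2$, then argues by cases on whether $p\mid y$; your combinations $x(xa-myb)+my(xb+ya)=az^2$ and $y(xa-myb)-x(xb+ya)=-bz^2$, after the legitimate symmetry reduction to $p\nmid z$, reach the contradiction $p\mid\gcd(a,b,c)$ more directly. For (ii), the paper fixes an odd prime with $p^{2k+1}\mid g$ and $p^{2k+2}\nmid g$, shows $p\nmid yb$ and $p^{2k+1}\mid c^2,z^2$, and then compares exponents of $p$ in $p^{2k+1}(ta-sb)=yc^2$ and $p^{2k+1}(tx-sy)=bz^2$ to force $p\mid xb-ya$; combined with $p\mid xb+ya$ this yields $p\mid 2ya$ and hence $p=2$. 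You instead compute the valuation exactly: an upper bound $g\mid\gcd(c,z)^2$ coming from the identities $a(xa-myb)+mb(xb+ya)=c^2x$ and its companions together with $\gcd(x,y)=\gcd(a,b)=1$, and a matching lower bound obtained by treating $u\equiv ab^{-1}$ and $v\equiv xy^{-1}$ as square roots of $-m$ modulo $p^{2\alpha}$ and $p^{2\beta}$ and splitting into the cases $u\equiv v$ (excluded because $uv-m\equiv-2m$ is a unit) and $u\equiv-v\pmod{p^{2\gamma}}$. Your version is longer and needs the preliminary facts $p\nmid abxym$ and the identification of $g$ with $\gcd(xa-myb,\,xb+ya)$, but it buys the sharper conclusion $v_p(g)=2\min\left(v_p(c),v_p(z)\right)$ for every odd prime $p$, of which the paper's parity statement is a corollary; the paper's argument is shorter because it only ever needs the single odd power $p^{2k+1}$ rather than the full valuation.
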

\begin{proof}

i) On the contrary, suppose a prime $p$ is a common factor of $xa - myb$, $xb+ya$, and $zc$. 
Then there exist nonzero integers $s$ and $t$ such that $ps = xa-myb$ and 
$pt=xb+ya$. It follows that
\begin{eqnarray} \label{eqd1}
p(ta-sb) &=& yc^2; \\ \label{eqd2}
p(tx - sy) &=& bz^2.
\end{eqnarray}
If $p\mid y$ then $p\nmid z$ (since $[x,y,z]$ is primitive), and so $p\mid b$ by \eqref{eqd2}, which implies that $p\nmid c$ (since $[a,b,c]$ is primitive). This is a contradiction, since $p\mid zc$. Therefore $p \nmid y$. Similarly, $p\nmid b$. But then $p\mid\gcd(z,c)$ by \eqref{eqd1} and \eqref{eqd2}, which is a 
contradiction.

ii) Let $p$ be a prime and $k\geq 0$ such that $p^{2k+1} \mid g$ and $p^{2k+2}\nmid g$. We need to show that $p=2$. One writes $p^{2k+1}s = xa-myb$ and $p^{2k+1}t=xb+ya$ for some nonzero integers $s$ and
$t$ at least one of which relatively prime with $p$. Hence
\begin{equation}
\label{prop1}
p^{2k+1}(ta-sb) = yc^2~~~\text{and}~~~p^{2k+1} (tx-sy)= bz^2,
\end{equation}
and as in the proof of (i), we must have $p\nmid y$, $p \nmid b$, $p^{2k+1}\mid c^2$, and 
$p^{2k+1}\mid z^2$. Therefore, $z= p^uz_1,~c=p^vc_1$, where $u,v\in\natu$ and $\gcd(z_1,p)=\gcd(c_1,p)=1$. Then we 
can rewrite formulas (\ref{prop1}) in the forms
\begin{equation}
\label{prop1b}
p^{2k+1}(ta-sb) = yp^{2v}c_1^2~~~\text{and}~~~p^{2k+1} (tx-sy)= bp^{2u}z_1^2.
\end{equation}
By comparing the exponents of $p$ on both sides of the equations in \eqref{prop1b}, we conclude that $p\mid (ta-sb)$ and $p\mid (tx-sy)$, hence
$p\mid s(xb-ya)$ and $p\mid t(xb-ya)$. Since either $\gcd(p,t)=1$ or $\gcd(p,s)=1$, we deduce that $p\mid xb-ya$. Since $p\mid xb+ya$, we must have $p\mid 2ya$, and so $p=2$. 
\end{proof}

The general positive integer solution of Pell's equation is given by $(x,y)=(a_n,b_n)$, where 
\begin{equation}\label{genpell}
 a_n + b_n \sqrt{m}=(a_1+ b_1\sqrt{m})^n,
\end{equation}
with $(a_1,b_1)$ being the least positive solution; \cite[Theorem 7.26]{Niven}. Since $a_n^2-mb_n^2=1$, we have $\gcd(a_n,b_n)=1$ for all $n\in \mathbb{N}$.
The least positive solution $(a_1,b_1)$ is found via the periodic continued fraction expansion of $\sqrt{m}$. Let $\eta = [u_0;u_1,u_2,\ldots ]$ denote the infinite continued fraction
$$
\eta = u_0 + \dfrac{1}{u_1+\dfrac{1}{u_2 +\cdots        } }.
$$
The simple continued 
fraction expansion of $\sqrt{m}$ has the form
$$
\sqrt{m} =  [u_0;u_1,\ldots,u_r,u_1,\ldots,u_r,u_1,\ldots]  = [u_0;\overline{u_1,\ldots,u_r}],
$$
where $r$ denotes the length of the shortest period in the expansion of $\sqrt{m}$; \cite[Theorem 7.21]{Niven}.
The $i$th convergent of $[u_0;u_1,u_2,\ldots ]$ is the truncated continued fraction $h_i/k_i=[u_0;u_1,\ldots, u_i ]$ with $\gcd(h_i,k_i)=1$. All positive solutions of \eqref{pelleq} are of the form $(h_n, k_n)$. Let $l=r$ if $r$ is even and $l=2r$ if $r$ is odd. Then, one has $(a_n,b_n)=( h_{nl -1}, k_{nl-1})$. 
\\
\\
{\bf Notation.} From now on, $m>1$ is a square-free integer and $a,b$ are such that $(a,b)=(a_1,b_1)$ is the least positive solution of the Pell's equation \eqref{pelleq}.

\begin{prop}\label{rankpm}
The rank of $P_m$ is infinite.
\end{prop}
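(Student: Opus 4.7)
The plan is to detect $\inte$-linear independence among the natural generators $[1,b_n,a_n]$ of $P_m$ (with $(a_n,b_n)$ the positive integer solutions of Pell's equation) by means of valuations at split prime ideals of the ring of integers $\O$ of $F_m=\rati[\sqrt{-m}]$, combined with the existence of primitive prime divisors of the Pell sequence $\{a_n\}$.

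For each odd prime $p\nmid m$ satisfying $(-m/p)=1$, the prime splits in $\O$ as $(p)=\mathfrak{p}\bar{\mathfrak{p}}$, and I would define $\phi_p\colon F_m^*\to\inte$ by $\phi_p(\alpha)=v_{\mathfrak{p}}(\alpha)-v_{\bar{\mathfrak{p}}}(\alpha)$. Since $v_{\mathfrak{p}}(q)=v_{\bar{\mathfrak{p}}}(q)=v_p(q)$ for every $q\in\rati^*$, the map $\phi_p$ vanishes on $\rati^*$ and descends to a homomorphism $G_m\to\inte$. Now for the generator $\alpha_n=1+b_n\sqrt{-m}$ one has $N(\alpha_n)=1+mb_n^2=a_n^2$; whenever an odd prime $p$ divides $a_n$ and is coprime to $m$, the relation $-mb_n^2\equiv 1\pmod{p}$ forces $(-m/p)=1$, and from $\alpha_n\bar\alpha_n=a_n^2$ together with $\alpha_n-\bar\alpha_n=2b_n\sqrt{-m}$ and $\gcd(p,2mb_n)=1$ one sees that exactly one of $\alpha_n,\bar\alpha_n$ lies in $\mathfrak{p}$, necessarily with multiplicity $2v_p(a_n)$; hence $\phi_p([1,b_n,a_n])=\pm 2v_p(a_n)\ne 0$.

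The final step is to invoke Carmichael's theorem on Lucas sequences (or the Bilu--Hanrot--Voutier theorem for a general reference) to produce, for every $n$ beyond some threshold $N_0$, a primitive prime divisor $p_n$ of $a_n$, i.e.\ a prime dividing $a_n$ but none of $a_1,\ldots,a_{n-1}$. Any such $p_n$ is automatically odd and coprime to $m$ for $n$ large, so the previous paragraph applies. For $k<n$ one has $p_n\nmid a_k^2=N(\alpha_k)$, hence $\phi_{p_n}([1,b_k,a_k])=0$, while $\phi_{p_n}([1,b_n,a_n])\ne 0$. Given any relation $\sum_{i=1}^{N}c_i[1,b_{n_i},a_{n_i}]=0$ with $N_0\le n_1<\cdots<n_N$, applying $\phi_{p_{n_N}}$ kills every term except the last and forces $c_N=0$, and a descending induction then yields $c_{N-1}=\cdots=c_1=0$. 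Thus $\{[1,b_n,a_n]\}_{n\ge N_0}$ is $\inte$-linearly independent in $P_m$, establishing that the rank is infinite.

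The main substantive ingredient is the primitive-prime-divisor theorem; the rest is bookkeeping with valuations. A fully elementary substitute would require showing that $a_n$ eventually exceeds the product of its non-primitive prime-power factors, which carries essentially the same content as Carmichael's theorem.
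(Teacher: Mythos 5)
Your proof is correct, but it takes a genuinely different route from the paper's. The paper argues elementarily: from the doubling identity $b_{2^n}=2a_{2^{n-1}}b_{2^{n-1}}$ it deduces that the integers $a_{2^n}$ are pairwise coprime, restricts attention to the subsequence $[1,b_{2^n},a_{2^n}]$, and then uses its primitivity lemma (Lemma \ref{primitive}) to show that in any nontrivial integer combination the third components $z_n\mid a_{2^n}^{|k_n|}$ cannot all cancel; it also needs the classification of torsion in $G_m$ (only $[1,1,2]$ in $G_3$) to guarantee $z_n\neq 1$. You instead construct, for each odd prime $p$ split in $\rati[\sqrt{-m}]$, the homomorphism $\phi_p=v_{\mathfrak{p}}-v_{\bar{\mathfrak{p}}}\colon G_m\to\mathbb{Z}$ and detect independence by evaluating these maps at primitive prime divisors of the $a_n$, supplied by Carmichael's theorem. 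Your valuation computation is sound: if $p\mid a_n$ is odd then $p\nmid 2mb_n$, so exactly one of $1\pm b_n\sqrt{-m}$ lies in $\mathfrak{p}$, giving $\phi_p([1,b_n,a_n])=\pm 2v_p(a_n)$, and the triangular argument then works. Your approach is cleaner in two respects: the homomorphisms to $\mathbb{Z}$ make the torsion classification and the gcd bookkeeping of Lemma \ref{primitive} irrelevant, and you obtain independence of the entire tail $\{[1,b_n,a_n]\}_{n\geq N_0}$ rather than of a sparse subsequence. The trade-off is that you import a nontrivial external theorem, whereas the paper's power-of-two trick is entirely self-contained. The one point you should make explicit in a full write-up is the passage from Carmichael's theorem, which is stated for the Lucas sequence $U_n$, to primitive divisors of $a_n$: since $2a_n=V_n$ and $U_{2n}=U_nV_n$, a primitive prime divisor of $U_{2n}$ divides $a_n$ but no $a_k$ with $k<n$.
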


\begin{proof}
It follows from \eqref{genpell} that 
$b_{2^n} = 2a_{2^{n-1}}b_{2^{n-1}}$, and so $a_{2^{n-1}} \mid b_{2^n}$ and 
$b_{2^{n-1}} \mid b_{2^n}$ for all $n\in \mathbb{N}$. A simple finite induction 
shows that $a_{2^j} \mid b_{2^i}$ for all $i>j\geq 0$. On the other hand $\gcd(a_{2^i}, b_{2^i})=1$, and so $\gcd(a_{2^i},a_{2^j})=1$ for all $i>j \geq 0$. 
We claim that the triples $[1,b_{2^n},a_{2^n}], ~n\in\natu$, are linearly independent over $\mathbb{Z}$. To see this, suppose that there exists a finite set $\Omega \subseteq \mathbb{N}$ and nonzero integers $k_n$, $n\in \Omega$, such that the primitive representation of
$$\sum_{n\in \Omega} k_n [1,b_{2^n},a_{2^n}]$$ 
is the identity element $[1,0,1]$. For each $n\in \Omega$, let $[x_n,y_n,z_n]$ be the primitive representation of $k_n[1,b_{2^n},a_{2^n}]$. Then $z_n \mid (a_{2^n})^{|k_n|}$. Moreover, $z_n\neq 1$, since otherwise $[1,b_{2^n},a_{2^n}]$ would be an element of finite order which is possible only if $m=3$ and $[1,b_{2^n},a_{2^n}]=[1,1,2]$ (see \cite{Kulzer}), which we can remove and still have infinitely many triples. Since $\gcd(z_i,z_j)=1$ for all $i\neq j$ in $\Omega$, it follows from Lemma \ref{primitive} that the third component in the primitive representation of $\sum_{n\in \Omega}[x_n,y_n,z_n]$ is $\prod_{n \in \Omega}z_n\neq 1$. Therefore, the triples $[1,b_{2^n},a_{2^n}]$, $n\in \mathbb{N}$, are linearly independent over $\mathbb{Z}$, and so the rank of $P_m$ is infinite. 
\end{proof}

\subsection{The recursion $A_{n+1}=2aA_n-A_{n-1}$}
The sequence $a_n$, $n \in \natu$, defined by \eqref{genpell} satisfies the recursion:
\begin{equation}\label{rec}
A_{n+1}=2aA_n-A_{n-1},~n\geq 2,
\end{equation}
The following proposition follows from straightforward calculations, and therefore its proof is omitted (for example, see \cite[Ch. 2]{Elaydi}). 
\begin{prop}\label{properties}
Let $F_n,G_n$ be the solutions of the recursion \eqref{rec} with 
$F_0=0,~F_1=1$, and $G_0=2,~G_1=2a$. Let $r,s$ be distinct solutions of the quadratic equation \begin{equation}\label{equad}
x^2-2ax+1=0.
\end{equation} 
Then we have:
\begin{itemize}
\item[i)] $F_n=(r^n-s^n)/(r-s)$ and $G_n=r^n+s^n$. 
\item[ii)] $G_n=2a_n$.
\item[iii)]$F_{2n}=F_nG_n$. 
\item[iv)] $G_n^2-4(a^2-1)F_n^2=4$. 
\item[v)] $
\begin{pmatrix}
F_{n+1} & -F_n\\
F_n & - F_{n-1}
\end{pmatrix} = 
\begin{pmatrix}
2a & -1\\
1 & 0
\end{pmatrix}^n
$.
\end{itemize}
All identities hold modulo an odd prime $p$ if $r,s$ are distinct solutions of \eqref{equad} modulo $p$. 
\end{prop}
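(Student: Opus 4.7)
The plan is to treat this as a standard exercise in solving a second-order linear recurrence, prove part (i) first, and then derive everything else from the closed-form expression. Since $r$ and $s$ are the roots of the characteristic polynomial $x^2-2ax+1$, I have the relations $r+s=2a$ and $rs=1$, which are the workhorses behind every identity below.

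For part (i), the general solution of \eqref{rec} is $A_n=\alpha r^n+\beta s^n$, and solving the linear system given by the initial conditions $F_0=0,F_1=1$ yields $\alpha=-\beta=1/(r-s)$, while $G_0=2,G_1=2a=r+s$ gives $\alpha=\beta=1$. Part (ii) then follows from the uniqueness of solutions of \eqref{rec}: the Pell sequence $a_n$ defined by \eqref{genpell} satisfies the same recursion (this is the observation preceding the proposition), and the initial values match since $2a_0=2=G_0$ and $2a_1=2a=G_1$. Part (iii) is an immediate consequence of (i), because
\[
F_nG_n=\frac{r^n-s^n}{r-s}(r^n+s^n)=\frac{r^{2n}-s^{2n}}{r-s}=F_{2n}.
\]

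For part (iv), I would use $(r-s)^2=(r+s)^2-4rs=4a^2-4=4(a^2-1)$ to rewrite
\[
G_n^2-4(a^2-1)F_n^2=(r^n+s^n)^2-(r^n-s^n)^2=4(rs)^n=4.
\]
Part (v) I would prove by induction on $n$. The base case $n=1$ is direct ($F_2=2a$, $F_1=1$, $F_0=0$), and the inductive step follows by multiplying the claimed formula by $\bigl(\begin{smallmatrix}2a&-1\\1&0\end{smallmatrix}\bigr)$ on the left and using the recursion \eqref{rec} in each entry.

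The final sentence about working modulo an odd prime $p$ is essentially free once one traces through the arguments: every step is a polynomial identity in $r,s$ except the division by $r-s$ in the closed form for $F_n$ and the implicit division by $2$ when we write the solutions of \eqref{equad}. Thus the only potential obstacle is ensuring these inversions are valid in $\mathbb{F}_p$, which is guaranteed precisely when $p$ is odd and $r\neq s$ modulo $p$. I do not anticipate any substantive difficulty in the proof; the hardest part is simply to record the verifications cleanly, which is presumably why the authors chose to omit them.
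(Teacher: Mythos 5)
Your proof is correct and is exactly the standard argument the paper has in mind: the authors omit the proof, citing it as a straightforward calculation via the theory of second-order linear recurrences (their reference to Elaydi), which is precisely your route through the closed forms $F_n=(r^n-s^n)/(r-s)$, $G_n=r^n+s^n$ and the relations $r+s=2a$, $rs=1$. Your closing observation that the only inversions needed are of $r-s$ and $2$ correctly accounts for the ``modulo an odd prime $p$'' clause, so nothing is missing.
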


In Section 3, we are interested in odd primes $p$ such that $p \nmid a_n$ for all $n\in \mathbb{N}$, since such primes are not divisors of third components of triples in $P_m$. Prime factors and non-factors of terms of recursions of degree 2 have been studied in \cite{ward}. Every prime $p$ divides $F_n$ for some $n>0$. The least such $n$ is called the rank of $p$ 
and is denoted by $\rho=\rho(p)$, and $p \mid F_n$ if and only if $\rho \mid n$. 

\begin{lemma}\label{rankprime}
$\rho(p)$ is odd if and only if $p \nmid a_n$ for all $n\in \mathbb{N}$. 
\end{lemma}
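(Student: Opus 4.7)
The plan is to combine two identities from Proposition \ref{properties}: the doubling formula $F_{2n}=F_n G_n = 2 F_n a_n$ (parts (iii) and (ii)), and the ``Pell-like'' identity $a_n^2-(a^2-1)F_n^2=1$ obtained by dividing part (iv) by $4$. The second identity immediately yields $\gcd(a_n,F_n)=1$ for every $n$, so in particular, modulo any odd prime $p$, the two sequences $a_n$ and $F_n$ cannot simultaneously vanish. The third ingredient is the standard rank property: $p\mid F_n$ if and only if $\rho(p)\mid n$.

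For the forward direction, I will argue by contrapositive. Suppose $p\mid a_n$ for some $n$. By coprimality, $p\nmid F_n$, hence $\rho(p)\nmid n$. On the other hand, $F_{2n}=2F_n a_n$ is divisible by $p$ (as $p$ is odd), so $\rho(p)\mid 2n$. If $\rho(p)$ were odd, then $\gcd(\rho(p),2)=1$ would force $\rho(p)\mid n$, a contradiction. Hence $\rho(p)$ must be even.

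For the reverse direction, suppose $\rho=\rho(p)$ is even and write $\rho=2k$. From the doubling formula, $p\mid F_{2k}=2F_k a_k$. Since $k<\rho$, the minimality of the rank gives $p\nmid F_k$, and since $p$ is odd we must have $p\mid a_k$. Thus $p$ divides some term $a_n$, completing the contrapositive.

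There is no real obstacle here: once one invokes Proposition \ref{properties} to reduce the problem to the arithmetic of the sequences $(F_n)$ and $(a_n)$, everything follows from the basic rank property and the coprimality $\gcd(a_n,F_n)=1$. The only minor subtlety worth flagging in the write-up is the need for $p$ to be odd (used twice: to discard the factor $2$ in $F_{2n}=2F_n a_n$, and to split $2n$ into parities), which is consistent with the hypothesis on $p$ in the surrounding discussion.
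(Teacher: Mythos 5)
Your proof is correct and follows essentially the same route as the paper: both directions rest on the doubling identity $F_{2n}=F_nG_n=2F_na_n$ from Proposition \ref{properties}(ii)--(iii), the coprimality of $F_n$ and $a_n$ coming from identity (iv), and the rank property $p\mid F_n\iff\rho(p)\mid n$. The paper's version is just a terser statement of the same argument.
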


\begin{proof}
If $\rho$ is even, then identity (iii) of Proposition \ref{properties}
with $n=\rho/2$ implies that $p \mid F_nG_n$, hence $p\mid G_n$, since $p \nmid F_n$. 
The converse follows similarly (note that $p \nmid \gcd(F_n,G_n)$ by identity (iv) of Proposition \ref{properties}). 
\end{proof}

\subsection{The polynomial $f(x)=x^8-2ax^4+1$}

By replacing $x$ by $x^4$ in \eqref{equad}, we obtain the polynomial $f(x) = x^8 - 2ax^4  + 1$ which plays an important role in the next section. Here, we determine when $f(x)$ is reducible, and we show that its splitting field over the rationals does not contain the splitting field of $x^8+1$. Let 
$\alpha=\sqrt[4]{a + b\sqrt{m}}$ be a real root of $f(x)$. Then clearly:
$$
f(x) = (x \pm \alpha)(x \pm 1/\alpha)(x \pm i\alpha)(x \pm i/\alpha).
$$

\begin{prop}\label{polynom}
$2(a+1)$ is never a perfect square; moreover, the following properties are equivalent:
\begin{itemize}
\item[i)] $2(a-1)$ is a perfect square.
\item[ii)] $\O_{\rati[\sqrt{m}]}$ contains units of norm -1.
\item[iii)] The negative Pell's equation $X^2-mY^2=-1$ has integer solutions. 
\item[iv)] $f(x)$ is reducible over $\mathbb{Q}$.
\item[v)] $f(x)= (x^4 -\sqrt{2(a-1)}x^2 - 1)(x^4 +\sqrt{2(a-1)}x^2 -1) : = P_-(x)P_+(x),
$
and $P_{-}(x),P_+(x) \in \mathbb{Q}[x]$ are both irreducible. 
\end{itemize}
\end{prop}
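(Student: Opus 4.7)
The plan is to prove the non-square claim and the equivalences through the cycle (i) $\Leftrightarrow$ (iii) $\Leftrightarrow$ (ii) using unit theory in $\O := \O_{\rati[\sqrt{m}]}$, then (i) $\Rightarrow$ (v) $\Rightarrow$ (iv), and finally closing with (iv) $\Rightarrow$ (i) through a field-degree computation. The unifying tool is the identity
\[
(\epsilon \pm 1)^2 = 2(a \pm 1)\,\epsilon, \qquad \epsilon := a + b\sqrt{m},
\]
obtained by expanding $(a \pm 1 + b\sqrt{m})^2$ and using $mb^2 = (a-1)(a+1)$. Suppose $2(a+1)=k^2$ for some positive integer $k$. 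Then $k$ is even; write $k=2K$, so $a+1=2K^2$. The element $\tau := (\epsilon+1)/k = K + (b/(2K))\sqrt{m}$ satisfies $\tau^2 = \epsilon$, $N(\tau) = 2(a+1)/k^2 = 1$, and has trace $k \in \inte$, so its minimal polynomial $X^2 - kX + 1$ lies in $\inte[X]$ and $\tau \in \O$. Since $K \in \inte$, the shape of $\O$ forces $b/(2K) \in \inte$ as well, so $\tau \in \inte[\sqrt{m}]$. Thus $\tau$ is a norm-$(+1)$ unit of $\inte[\sqrt{m}]$ strictly between $1$ and $\epsilon$, contradicting the fundamentality of $\epsilon$. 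Hence $2(a+1)$ is never a perfect square.

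The same reasoning applied to $(\epsilon-1)/k$ when $2(a-1)=k^2$ yields $\tau' = X + Y\sqrt{m} \in \inte[\sqrt{m}]$ (with $X = k/2$ and $Y = b/k$) of norm $-1$ satisfying $(\tau')^2 = \epsilon$, so $(X,Y)$ solves negative Pell. Conversely, if negative Pell has an integer solution, let $\eta = X + Y\sqrt{m}$ be the fundamental norm-$(-1)$ unit in $\inte[\sqrt{m}]$; then $\eta^2 = \epsilon$, giving $a = X^2 + mY^2 = 2X^2 + 1$ and $2(a-1) = (2X)^2$. Thus (i) $\Leftrightarrow$ (iii). For (ii) $\Leftrightarrow$ (iii), the direction (iii) $\Rightarrow$ (ii) is immediate, and (ii) $\Rightarrow$ (iii) follows because the fundamental unit $\eta_0 \in \O^*$ satisfies $\eta_0^r \in \inte[\sqrt{m}]^*$ for some odd $r \in \{1,3\}$, so $N(\eta_0) = -1$ forces $N(\eta_0^r) = -1$, producing an integer negative-Pell solution.

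For (i) $\Rightarrow$ (v), set $X = \sqrt{2(a-1)}/2 \in \inte$ and expand
\[
(x^4 - 2Xx^2 - 1)(x^4 + 2Xx^2 - 1) = (x^4 - 1)^2 - 4X^2 x^4 = x^8 - (2 + 4X^2)x^4 + 1 = f(x),
\]
using $4X^2 = 2(a-1)$. Irreducibility of $P_\pm$ over $\rati$ is a routine quartic check: any rational root $r$ of $P_-$ would give $r^2$ as a rational root of $y^2 - 2Xy - 1$, whose roots $X \pm Y\sqrt{m}$ are irrational; and any factorization $P_-(x) = (x^2 + px + q)(x^2 - px + s)$ with $p, q, s \in \rati$ forces, by coefficient matching, either $p = 0$ (returning the same irrational roots) or $q = s$ with $q^2 = -1$, which is impossible. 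The analysis of $P_+$ is identical, and (v) $\Rightarrow$ (iv) is trivial.

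The main obstacle is (iv) $\Rightarrow$ (i), which I prove by contrapositive via a field-degree computation. Let $\alpha := \sqrt[4]{a + b\sqrt{m}}$ be the positive real root of $f$, and consider the tower
\[
\rati \subset \rati(\sqrt{m}) \subseteq \rati(\alpha^2) \subseteq \rati(\alpha),
\]
each step of degree at most $2$. When (i) fails, by the equivalence (i) $\Leftrightarrow$ (iii) established above, $a + b\sqrt{m}$ is not a square in $\rati(\sqrt{m})$, so $[\rati(\alpha^2):\rati(\sqrt{m})] = 2$ and $[\rati(\alpha^2):\rati] = 4$. The crux is to show $\alpha \notin \rati(\alpha^2)$: writing $\alpha = u + v\alpha^2$ with $u, v \in \rati(\sqrt{m})$ in the basis $\{1, \alpha^2\}$ over $\rati(\sqrt{m})$ and squaring yields $u^2 + v^2(a + b\sqrt{m}) = 0$ together with $2uv = 1$; eliminating $v$ gives $4u^4 = -(a + b\sqrt{m})$, which is impossible because the left side is non-negative while the right side is strictly negative (as $\rati(\sqrt{m}) \subset \real$). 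Hence $[\rati(\alpha):\rati] = 8$, so $f$, being a degree-$8$ polynomial with root $\alpha$, is the minimal polynomial of $\alpha$ and is therefore irreducible, contradicting the reducibility assumption.
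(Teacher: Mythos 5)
Your argument is correct in its overall architecture but follows a genuinely different route from the paper's, most visibly in the hard direction (iv) $\Rightarrow$ (i). The paper proves the non-square claims by a divisibility descent ($2(a\pm 1)=t^2$ forces $t\mid b$ and yields the smaller solution $(t/2,\,b/t)$ of $x^2-my^2=\pm 1$), and proves (iv) $\Rightarrow$ (i) by enumerating which subsets of the eight roots $\{\pm\alpha,\pm 1/\alpha,\pm i\alpha,\pm i/\alpha\}$ could multiply out to a rational factor of degree $2$ or $4$, extracting in each case that $2(a\pm 1)$ is a perfect square. Your unit-theoretic descent via $(\epsilon\pm 1)^2=2(a\pm 1)\epsilon$, $\epsilon=a+b\sqrt m$, produces the same smaller solutions more conceptually, and your tower $\rati\subset\rati(\sqrt m)\subset\rati(\alpha^2)\subset\rati(\alpha)$ replaces the paper's case analysis by a single degree count; you also supply the direction (iii) $\Rightarrow$ (i) (via $\eta^2=\epsilon$) that the paper leaves implicit. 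The trade-off: the paper's version is elementary and self-contained, while yours is shorter but leans on standard facts about $\O^*$ versus $\inte[\sqrt m]^*$ and about the fundamental solution of the negative Pell equation.

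One step needs repair. In (iv) $\Rightarrow$ (i) you assert that when (i) fails, $\epsilon$ is not a square in $\rati(\sqrt m)$, ``by the equivalence (i) $\Leftrightarrow$ (iii).'' That equivalence concerns \emph{integer} solutions of the negative Pell equation and does not by itself exclude $\epsilon=\beta^2$ with $\beta\in\rati(\sqrt m)$ of norm $+1$ or with non-integral coordinates. The claim is true, and your own identities give it quickly: if $\epsilon=(s+t\sqrt m)^2$ with $s,t\in\rati$, then $a=s^2+mt^2$ and $b=2st$, so $(s^2-mt^2)^2=a^2-mb^2=1$, hence $s^2-mt^2=\pm 1$ and $2(a\pm 1)=(2s)^2$ for one choice of sign; a positive integer that is the square of a rational is a perfect square, and since $2(a+1)$ never is, $2(a-1)$ must be, i.e.\ (i) holds. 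With that two-line lemma inserted, $[\rati(\alpha^2):\rati(\sqrt m)]=2$ is justified, the computation showing $\alpha\notin\rati(\alpha^2)$ goes through as you wrote it, and the proof is complete.
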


\begin{proof}
If $2(a+1)=t^2$ for $t\in \mathbb{Z}$, then $t$ is even and it follows from $a^2-mb^2=1$ that $mb^2=t^4/4-t^2=t^2((t/2)^2-1)$. Since $m$ is square-free, one must have $t \mid b$, and so $(t/2)^2-m(b/t)^2=1$. This contradicts the assumption that $(a,b)$ is the least positive solution of \eqref{pelleq}. Similarly, if $2(a-1)=t^2$ for $t\in \mathbb{Z}$, one must have $(t/2)^2  + 1 = m(b/t)^2$, that is the 
negative Pell's equation has integer solutions, which is equivalent to $\O_{\mathbb{Q}[\sqrt{m}]}$ 
having units of norm -1. 

If $2(a-1)$ is a perfect square, then it is straightforward to check that $f(x)=P_{-}(x)P_{+}(x)$ as in (v). Since $P_{-}(x)=(x\pm \alpha)(x\pm i/\alpha)$ and $P_{+}(x)=(x\pm 1/\alpha)(x\pm i \alpha)$, and $\alpha^2 \notin \mathbb{Q}$, both are irreducible over $\mathbb{Q}$.

It is left to show that if $f(x)$ is reducible then $2(a-1)$ is a perfect square. Clearly $f(x)$ cannot have a degree-1 factor in $\mathbb{Q}[x]$ since none of its roots are rational. It follows that $f(x)$ cannot have degree-3 or degree-5 factors either. Suppose $f(x)$ has a degree-2 factor, and so $g(x)=(x-\gamma_1)(x-\gamma_2)$ where $\gamma_1,\gamma_2$ are two roots of $f(x)$. It is easy to check that $\{\gamma_1,\gamma_2\}=\{\pm \alpha, \pm 1/\alpha\}$ is the only nontrivial case to consider. In this case, one must have $\gamma_1+\gamma_2 \in \mathbb{Q}$, which gives $\alpha \pm 1/\alpha \in \mathbb{Q}$. But note that then
$$
2(a+1) = \alpha ^4 + 2 + 1/\alpha^4 = ((\alpha \pm 1/\alpha)^2 \mp 2)^2,
$$
which contradicts the fact that $2(a+1)$ is never a perfect square. Finally suppose $f(x)$ has a degree-4 factor $g(x)=(x-\gamma_1)(x-\gamma_2)(x-\gamma_3)(x-\gamma_4)$ in $\mathbb{Q}[x]$. A similar analysis shows that the only nontrivial case to consider is when $\{\gamma_1,\gamma_2,\gamma_3, \gamma_4\}=\{\mu \alpha, - \mu \alpha, \eta/\alpha, -\eta /\alpha\}$, where $\mu, \eta \in \{1,i\}$. In both cases one has $2\alpha^2 \pm 2/\alpha^2=\gamma_1^2+\gamma_2^2+\gamma_3^2+\gamma_4^2 \in \mathbb{Q}$. It follows that
$$2(a \pm 1)=(\alpha^2 \pm 1/\alpha^2)^2$$
is a perfect square. Since $2(a+1)$ is not perfect square, we conclude that $2(a-1)$ is a perfect square, and the proof is completed. 
\end{proof}

\begin{definition}
Let $K$ be the splitting field of $f(x)$, or if $f(x)$ is reducible, let $K$ be the splitting field of $P_-(x)$. Also let $L=\rati[\zeta_{16}]$, where $\zeta_{16}$ is the primitive $16^{th}$ root of unity.  
\end{definition}

Next three lemmas imply that $L \nsubseteq K$.

\begin{lemma} \label{reduciblecase}
Suppose that $f(x)=x^8-2ax^4+1$ is reducible. Then $L \nsubseteq K$. 
\end{lemma}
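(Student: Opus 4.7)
My plan is to observe that $K$ and $L$ are both degree-$8$ extensions of $\rati$, so that $L\subseteq K$ would force $L=K$, and then derive a contradiction by distinguishing their maximal real subfields.

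First, using Proposition \ref{polynom}(v), I note that when $f$ is reducible the polynomial $P_-(x)=x^4-\sqrt{2(a-1)}\,x^2-1$ lies in $\inte[x]$ and is irreducible over $\rati$, with roots $\pm\alpha$ and $\pm i/\alpha$ as recorded in the excerpt. Hence $K=\rati(\alpha,i)$; since $\alpha$ generates a quartic real extension of $\rati$ and $i\notin\real$, I get $[K:\rati]=2\cdot 4=8=\varphi(16)=[L:\rati]$. Therefore $L\subseteq K$ is equivalent to $L=K$, and it suffices to show that the latter fails.

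Next, I would compare maximal real subfields. Complex conjugation is a nontrivial $\rati$-automorphism of $K$ that fixes $\alpha\in\real$ and sends $i\mapsto -i$; its fixed field is $K\cap\real=\rati(\alpha)$, a quartic field whose minimal polynomial $P_-$ has the two \emph{non-real} roots $\pm i/\alpha$. In particular, $\rati(\alpha)/\rati$ is \emph{not} a normal extension. On the other hand, $L\cap\real=\rati(\zeta_{16}+\zeta_{16}^{-1})$ sits inside the abelian extension $L/\rati$, so every intermediate field (including this one) is automatically Galois over $\rati$; in fact $L\cap\real/\rati$ is cyclic of degree $4$. Thus $K\cap\real\neq L\cap\real$, whence $K\neq L$ and therefore $L\nsubseteq K$.

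The main obstacle is really just the initial setup: pinning down $K=\rati(\alpha,i)$ and verifying $[K:\rati]=8$ using the irreducibility of $P_-$ supplied by Proposition \ref{polynom}. Once this is in place, the contrast between ``$\rati(\alpha)$ is not normal'' and ``every subfield of the abelian extension $L/\rati$ is Galois'' makes the comparison immediate and dispenses with any case analysis in $m$.
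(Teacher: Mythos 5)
Your proof is correct, and it rests on the same underlying obstruction as the paper's --- $L/\rati$ is abelian of degree $8$ while $K/\rati$ is not --- but it gets there by a different and more self-contained mechanism. The paper identifies $\mathrm{Gal}(K/\rati)\cong D_4$ by citing a classification result for irreducible quartics of the form $x^4+px^2+q$ (Proposition 7.147 of \cite{Bordelles}), notes that $\mathrm{Gal}(L/\rati)$ is the abelian group $\inte/2\inte\times\inte/4\inte$, and concludes from the fundamental theorem of Galois theory that the nonabelian $D_4$ admits no such quotient of order $8$. You avoid computing $\mathrm{Gal}(K/\rati)$ altogether: after the degree count $[K:\rati]=[L:\rati]=8$ (which correctly reduces the claim to $K\neq L$), you exhibit the index-two real subfield $K\cap\real=\rati(\alpha)$, observe that it is not normal over $\rati$ because the minimal polynomial $P_-$ of $\alpha$ has the non-real roots $\pm i/\alpha$, and contrast this with the fact that every intermediate field of the abelian extension $L/\rati$ is normal. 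Both arguments are sound; yours trades the external citation for an elementary normality check, while the paper's version records the precise Galois group $D_4$, which is of some independent interest. One small step worth making explicit in your write-up is that $i=\alpha\cdot(i/\alpha)\in K$ as a product of two roots of $P_-$, which is what justifies both the identification $K=\rati(\alpha,i)$ and the nontriviality of complex conjugation on $K$.
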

\begin{proof}
In this case, $K$ is the splitting field of $P_-(x)=x^4 - \sqrt{2(a-1)}x^2 - 1 = (x\pm \alpha)(x \pm i/\alpha)$, which is irreducible in $\rati[x]$ by Proposition \ref{polynom}. A simple application of \cite[Proposition 7.147]{Bordelles} shows that the Galois group $Gal(K/\rati)\cong D_4$. Since the Galois group $Gal(\rati[\zeta_{16}]/\rati)$ is isomorphic to the product of two cyclic groups of order 2 and 4 (\cite[Proposition 7.68]{Bordelles}), we have $L \nsubseteq K$ by the fundamental theorem of Galois theory. 
\end{proof}

If $f(x)$ is irreducible, then its splitting field $K$ is a linear vector space over $\mathbb{Q}[\sqrt{m}]$ with the basis
$$
{\mathcal B}=\{1, \alpha, \alpha^2, \alpha^3, i,  i \alpha, i \alpha^2, i\alpha^3\}.
$$
To see this, it is sufficient to show that $1,\alpha, \alpha^2$, and $\alpha^3$ are linearly independent over $\mathbb{Q}[\sqrt{m}]$. First note that if $\alpha^3=x+y\alpha+z\alpha^2$ for some $x,y,z \in \mathbb{Q}[\sqrt{m}]$, then by multiplying by $\alpha$, we must have $1,\alpha$, and $\alpha^2$ linearly dependent over $\mathbb{Q}[\sqrt{m}]$. However, $1,\alpha, \alpha^2$ are linearly dependent if and only if the 
ring of integers of $\mathbb{Q}[\sqrt{m}]$ has units of norm -1, which is disallowed by Proposition \ref{polynom}. 

\begin{lemma}\label{pm1}
Suppose that $f(x)$ is irreducible in $\mathbb{Q}[x]$. Then $\sqrt{2} \in K$ if and only if $a+1$ or $a-1$ is a perfect square, in which case $\sqrt{2}=h \alpha^2$ for some $h\in \mathbb{Q}[\sqrt{m}]$. 
 \end{lemma}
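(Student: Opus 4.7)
The plan is to expand $\sqrt{2}$ in the $\mathbb{Q}[\sqrt{m}]$-basis $\mathcal{B}$ of $K$ and then pin down the expansion using the single relation $(\sqrt{2})^{2}=2$ together with the irreducibility of $f$. First I would note that complex conjugation acts on $K$ as a $\mathbb{Q}$-automorphism fixing $\sqrt{m}$ and $\alpha$ (both real under the chosen embedding) and sending $i$ to $-i$. Averaging an arbitrary expansion of $\sqrt{2}$ in $\mathcal{B}$ with its conjugate kills the coefficients of $i,i\alpha,i\alpha^{2},i\alpha^{3}$, leaving
\[
\sqrt{2}=c_{0}+c_{1}\alpha+c_{2}\alpha^{2}+c_{3}\alpha^{3},\qquad c_{j}\in\mathbb{Q}[\sqrt{m}].
\]

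Next I would square both sides, reduce using $\alpha^{4}=A:=a+b\sqrt{m}$, and compare coefficients in the $\mathbb{Q}[\sqrt{m}]$-basis $\{1,\alpha,\alpha^{2},\alpha^{3}\}$ to obtain four coupled quadratic equations in $c_{0},\ldots,c_{3}$. The key structural input is that the irreducibility of $f$ forces $A$ to be a non-square in $\mathbb{Q}[\sqrt{m}]$: otherwise $\alpha^{2}\in\mathbb{Q}[\sqrt{m}]$ up to sign and $[\mathbb{Q}(\alpha):\mathbb{Q}]\le 4$. A short split on whether $c_{0}=0$ should then force $c_{0}=c_{1}=c_{3}=0$ and $c_{2}\neq 0$: in the branch $c_{0}\neq 0$ the system collapses to $c_{1}=c_{2}=c_{3}=0$ and $c_{0}^{2}=2$, placing $\sqrt{2}\in\mathbb{Q}[\sqrt{m}]$ so that $m=2$; but then $(a,b)=(3,2)$ makes $2(a-1)=4$ a square, contradicting irreducibility of $f$ via Proposition \ref{polynom}.

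At that point $\sqrt{2}=h\alpha^{2}$ with $h=c_{2}\in\mathbb{Q}[\sqrt{m}]$, which settles the ``in which case'' assertion. From $h^{2}A=2$ and $A^{-1}=a-b\sqrt{m}$ I would write $h=p+q\sqrt{m}$, obtain $p^{2}+mq^{2}=2a$ and $pq=-b$, eliminate $q$ to get $p^{4}-2ap^{2}+mb^{2}=0$, and conclude $p^{2}=a\pm 1$, so that $a+1$ or $a-1$ is a perfect square. The converse is a direct verification: if $a\pm 1=p^{2}$, set $q=-b/p$ and $h=p+q\sqrt{m}$; then $h^{2}=2(a-b\sqrt{m})=2/A$ and $(h\alpha^{2})^{2}=2$, so $\sqrt{2}\in K$. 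The main obstacle is the case analysis after squaring: disentangling the degenerate solutions of the coupled quadratic system requires simultaneously using that $A$ is real, positive, and a non-square in $\mathbb{Q}[\sqrt{m}]$.
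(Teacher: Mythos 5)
Your proposal is correct and follows essentially the same route as the paper: expand $\sqrt{2}$ in the basis $\{1,\alpha,\alpha^{2},\alpha^{3}\}$ over $\mathbb{Q}[\sqrt{m}]$, square and compare coefficients, run a case analysis that forces every coefficient except that of $\alpha^{2}$ to vanish (using that $\alpha^{4}$ is positive and not a square in $\mathbb{Q}[\sqrt{m}]$), and then solve $h^{2}(a+b\sqrt{m})=2$ to reach $p^{2}=a\pm 1$. You are in fact slightly more careful than the paper on two points it leaves implicit, namely using complex conjugation to justify discarding the $i\alpha^{j}$ basis elements and explicitly verifying the converse direction of the equivalence.
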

\begin{proof}
Suppose on the contrary that there exist $x,y,z,t \in \mathbb{Q}[\sqrt{m}]$ such that 
$$\sqrt{2}=x+y\alpha+z \alpha^2 + t \alpha^3.$$
It follows that
$$2=x^2+z^2 \alpha^4+2yt \alpha^4 +(2xy+2zt\alpha^4)\alpha + (y^2+t^2\alpha^4+2xz) \alpha^2 + (2xt+2yz) \alpha^3.$$
It follows that:
\begin{eqnarray} \label{equ1}
x^2+z^2\alpha^4+2yt\alpha^4 &=& 2; \\ \label{equ2}
2xy+2zt\alpha^4 &=& 0;\\ \label{equ3}
y^2+t^2 \alpha^4 +2xz &=& 0;\\ \label{equ4}
2xt+2yz &=&0.
\end{eqnarray}
There are two cases:

Case 1: $z=0$. Equation \eqref{equ3} states that $y^2+t^2 \alpha^4=0$, hence $y=t=0$. Equation \eqref{equ1} then gives $x^2=2$, but $\sqrt{2} \notin \mathbb{Q}[\sqrt{m}]$ for $m>2$, a contradiction. Recall that if $m=2$, then $a=3$ and $f(x)$ is reducible.

Case 2: $z\neq 0$. From equations \eqref{equ2} and \eqref{equ4}, it follows that 
$xyt=-zt^2\alpha^4$ and $xty=-y^2z$, and so $y^2z=zt^2 \alpha^4$. It follows that 
$y^2=t^2 \alpha^4$, and so $y=\pm t\alpha^2$. Since $\{1,\alpha^2\}$ is linearly independent over 
$\mathbb{Q}[\sqrt{m}]$, we must have $t = y = 0$, which implies that $xz=0$ and so $x=0$. 
It then follows from \eqref{equ1} that $z^2\alpha^4=2$. Therefore, $z^2=2(a-b\sqrt{m})$. 
Let $z=s+t\sqrt{m}$ where $s,t \in \mathbb{Q}$. We have 
$$s^2+mt^2=2a,~st=-b \Rightarrow mt^4-2at^2+b^2=0 \Rightarrow t^2=\dfrac{a\pm 1}{m}.$$
Since $(a+1)(a-1)/m=b^2$, it follows that either $a+1$ or $a-1$ must be a perfect square. 
In this case $\sqrt{2}=h \alpha^2$, where $h=\pm z \in \mathbb{Q}[\sqrt{m}]$. 
\end{proof}

\begin{lemma}\label{pm2}
Suppose that $f(x)$ is irreducible in $\mathbb{Q}[x]$. Suppose also that $a+1$ or $a-1$ is a perfect square. Then $\sqrt{2+\sqrt{2}} \notin K$. 
\end{lemma}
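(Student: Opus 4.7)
The plan is to argue by contradiction, in the spirit of the proof of Lemma \ref{pm1}. Assume $\sqrt{2+\sqrt{2}} \in K$. Since $\sqrt{2+\sqrt{2}}$ is real and the fixed field of complex conjugation on $K = \mathbb{Q}[\alpha,i]$ is $\mathbb{Q}[\alpha]$, we have $\sqrt{2+\sqrt{2}} \in \mathbb{Q}[\alpha]$. Using the $F$-basis $\{1,\alpha,\alpha^2,\alpha^3\}$ of $\mathbb{Q}[\alpha]$ with $F := \mathbb{Q}[\sqrt{m}]$ (established in the paragraph before Lemma \ref{pm1}, using that $f$ is irreducible), I would write
$$\sqrt{2+\sqrt{2}} = x + y\alpha + z\alpha^2 + t\alpha^3$$
for some $x,y,z,t \in F$. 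Squaring both sides, using $\alpha^4 = A := a + b\sqrt{m}$ together with $\sqrt{2} = h\alpha^2$ (from Lemma \ref{pm1}, with $h \in F$ satisfying $h^2 A = 2$), and equating coefficients of $1, \alpha, \alpha^2, \alpha^3$, yields the system
\begin{align*}
x^2 + (z^2 + 2yt)A &= 2, \\
xy + ztA &= 0, \\
y^2 + 2xz + t^2 A &= h, \\
xt + yz &= 0.
\end{align*}

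Combining the second and fourth equations by linear elimination produces the two key identities $y(x^2 - z^2 A) = 0$ and $z(t^2 A - y^2) = 0$. Because $f$ is irreducible, $\alpha^2 \notin F$, i.e., $A$ is not a square in $F$; hence $x^2 = z^2 A$ forces $x = z = 0$, and $y^2 = t^2 A$ forces $y = t = 0$. The four combinations of the two disjunctions reduce to three nontrivial subcases: (a) $y = z = 0$; (b) $y = t = 0$; (c) $x = z = 0$.

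Each subcase is dispatched quickly. In (a), the first equation collapses to $x^2 = 2$, forcing $\sqrt{2} \in F$, hence $m = 2$; but then $2(a-1) = 4$ is a perfect square, so $f$ is reducible by Proposition \ref{polynom}, a contradiction. In (b), $\sqrt{2+\sqrt{2}} = x + z\alpha^2 \in F[\alpha^2]$; since $[F[\alpha^2]:\mathbb{Q}] = 4 = [\mathbb{Q}[\sqrt{2+\sqrt{2}}]:\mathbb{Q}]$ (the latter by Eisenstein applied to $x^4 - 4x^2 + 2$), we get $\mathbb{Q}[\sqrt{2+\sqrt{2}}] = F[\alpha^2]$, so $\sqrt{m} \in \mathbb{Q}[\sqrt{2+\sqrt{2}}]$. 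But $\mathbb{Q}[\sqrt{2+\sqrt{2}}]$ is the maximal real subfield of $L = \mathbb{Q}[\zeta_{16}]$, is cyclic of degree $4$ over $\mathbb{Q}$, and has $\mathbb{Q}[\sqrt{2}]$ as its unique quadratic subfield; hence $F = \mathbb{Q}[\sqrt{2}]$ and again $m = 2$, contradiction. In (c), the first and third equations give $yt = 1/A$ and $y^2 + t^2 A = h$, so $y^2$ and $t^2 A$ are the two roots of $X^2 - hX + 1/A = 0$; the discriminant equals $h^2 - 4/A = 2/A - 4/A = -2/A < 0$, so the roots are non-real, contradicting $y^2, t^2 A \in F \subset \mathbb{R}$.

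The main obstacle is producing the two identities $y(x^2 - z^2 A) = 0$ and $z(t^2 A - y^2) = 0$ from the four-variable system, since these are what reduce the problem to a short and tractable case analysis. After that step, (a) and (b) are handled via the subfield structure of $\mathbb{Q}[\sqrt{2+\sqrt{2}}]$, while (c) follows from a direct discriminant computation.
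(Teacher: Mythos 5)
Your proposal is correct and follows essentially the same route as the paper: the same expansion in the basis $\{1,\alpha,\alpha^2,\alpha^3\}$ over $\mathbb{Q}[\sqrt{m}]$, the same four equations after squaring, and the same reduction to the two nontrivial cases $x=z=0$ (settled by the identical negative-discriminant computation) and $y=t=0$. The only local difference is that in the case $y=t=0$ the paper eliminates directly to obtain $2\pm\sqrt{2}=2z^2\alpha^4\in\mathbb{Q}[\sqrt{m}]$, impossible for $m>2$, whereas you invoke the cyclic degree-$4$ structure of $\mathbb{Q}[\sqrt{2+\sqrt{2}}]$ as the real subfield of $\mathbb{Q}[\zeta_{16}]$; both arguments are valid.
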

\begin{proof}
We have $\sqrt 2=h\alpha^2$ for some $h \in \mathbb{Q}[\sqrt{m}]$ by Lemma \ref{pm1}. On the contrary, suppose 
$$\sqrt{2+\sqrt{2}}=x+y\alpha+z \alpha^2 + t \alpha^3,$$
where $x,y,z,t \in \mathbb{Q}[\sqrt{m}]$. After squaring and regrouping, we must have
\begin{eqnarray}\label{equa1}
x^2+z^2 \alpha^4+2yt \alpha^4 &=& 2;\\ \label{equa2}
2xy+2zt\alpha^4 &=&0;\\ \label{equa3}
y^2+t^2\alpha^4+2xz  &=& h;\\ \label{equa4}
2xt+2yz &=&0.
\end{eqnarray}
There are two cases:

Case 1: $z=0$. It follows from equations \eqref{equa2} and \eqref{equa4} that eiher $x=0$ or $y=t=0$. The latter contradicts \eqref{equa3}, hence $x=0$. Then equations \eqref{equa1} and \eqref{equa3} turn into:
$$yt\alpha^4=1,~y^2+t^2\alpha^4=h.$$
Eliminating $y$ yields $t^4-h\alpha^{-4}t^2+\alpha^{-12}=0$ which cannot have real solutions since $\Delta=h^2 \alpha^{-8}-4\alpha^{-12}=\alpha^{-8}(h^2-4\alpha^{-4})=-2\alpha^{-12}<0$, since $h=\sqrt{2}\alpha^{-2}$. 

Case 2: $z\neq 0$. Equations \eqref{equa2} and \eqref{equa4} imply that $y=\pm t \alpha^2$, but then $y=t=0$ and equations \eqref{equa1} and \eqref{equa3} turn into:
$$2xz=h,~x^2+z^2\alpha^4=2.$$
Eliminating $x$ yields $2z^4-4\alpha^{-4}z^2+\alpha^{-8}=0$. The solutions of this equation are 
$z^2=(2 \pm \sqrt{2})/(2\alpha^{4}).$ It follows that 
$2\pm \sqrt{2} = 2z^2\alpha^4\in \mathbb{Q}[\sqrt{m}]$ which is impossible for $m>2$. 
\end{proof}

\begin{cor}\label{corus}
$L \nsubseteq K$. 
\end{cor}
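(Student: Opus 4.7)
The plan is to deduce the corollary by combining the three preceding lemmas with the fact that $L=\mathbb{Q}[\zeta_{16}]$ contains both $\sqrt{2}$ and $\sqrt{2+\sqrt{2}}$. Explicitly, $\zeta_{16}+\zeta_{16}^{-1}=2\cos(\pi/8)=\sqrt{2+\sqrt{2}}$ lies in $L$, and since $(\sqrt{2+\sqrt{2}})^2-2=\sqrt{2}$, the field $L$ also contains $\sqrt{2}$. So if $L\subseteq K$, then necessarily $\sqrt{2+\sqrt{2}}\in K$ and $\sqrt{2}\in K$.

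I would then split into two cases according to whether $f(x)$ is reducible. If $f(x)$ is reducible, the corollary is immediate from Lemma \ref{reduciblecase}. If $f(x)$ is irreducible, I would argue by contradiction: assuming $L\subseteq K$ gives $\sqrt{2}\in K$, so Lemma \ref{pm1} forces $a+1$ or $a-1$ to be a perfect square. But once we are in that subcase, Lemma \ref{pm2} says $\sqrt{2+\sqrt{2}}\notin K$, which contradicts $\sqrt{2+\sqrt{2}}\in L\subseteq K$.

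The bulk of the work has already been carried out in Lemmas \ref{reduciblecase}, \ref{pm1}, and \ref{pm2}; the only thing left to supply in the corollary itself is the embedding of $\sqrt{2}$ and $\sqrt{2+\sqrt{2}}$ into $L$, which is a standard cyclotomic identity. There is no real obstacle here — the proof is essentially a short bookkeeping argument organizing the three preceding lemmas into the required dichotomy.
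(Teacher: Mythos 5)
Your proposal is correct and follows essentially the same route as the paper: dispose of the reducible case via Lemma \ref{reduciblecase}, and in the irreducible case combine Lemmas \ref{pm1} and \ref{pm2} with the cyclotomic identity $\zeta_{16}+\zeta_{16}^{-1}=\sqrt{2+\sqrt{2}}$ to conclude $\sqrt{2+\sqrt{2}}\notin K$ while it lies in $L$. The only (cosmetic) difference is that you phrase it as a contradiction starting from $L\subseteq K$, whereas the paper directly shows $\sqrt{2+\sqrt{2}}\notin K$.
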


\begin{proof}
If $f(x)$ is reducible, then the claim follows from Lemma \ref{reduciblecase}. If $f(x)$ is irreducible, Lemmas \ref{pm1} and \ref{pm2} show that $\sqrt{2+\sqrt{2}} \notin K$. However the real part of $2\zeta_{16}$ is $\sqrt{2+\sqrt{2}}$, and so $L \nsubseteq K$ in this case as well. 
\end{proof}


\section{Rank of the quotient group $G_m/P_m$}

Our goal in this section is to show that the quotient group $G_m/P_m$ has infinite rank for all square-free $m>1$. We begin with the following definition.

\begin{definition}
Let $\Lambda_m$ be the set of odd rational primes $p$ for which $(\frac{m}{p})=(\frac{-m}{p})=1$ and 
$1 + my^2 = (pz)^2$ has no integer solutions.
\end{definition}

Recall that the condition that $1+my^2=(pz)^2$ has no integer solutions is equivalent to $p \nmid a_n$ for all $n\in \mathbb{N}$.

\begin{lemma}\label{conda2}
If $p>a^2$ is a prime such that $p \neq 1 \pmod{16}$ and $f(x)=x^8-2ax^4+1$ splits completely modulo $p$, then $p \in \Lambda_m$.
\end{lemma}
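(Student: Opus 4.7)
The plan is to verify, in turn, the three conditions that define membership in $\Lambda_m$: that $p$ is odd, that $(m/p)=(-m/p)=1$, and that $p\nmid a_n$ for every $n\in\mathbb{N}$. The hypothesis $p>a^2$ will rule out degenerate reductions mod $p$, while the condition $p\not\equiv 1\pmod{16}$ will enter only at the very end, to forbid $16$-th roots of unity in $\mathbb{F}_p^*$.

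For the quadratic reciprocity conditions, since every root of $f$ lies in $\mathbb{F}_p$ and the root set is closed under $\alpha\mapsto i\alpha$, we must have $i\in\mathbb{F}_p$, hence $p\equiv 1\pmod 4$; in particular $p$ is odd and $(-1/p)=1$. Fix any root $\alpha\in\mathbb{F}_p$ of $f$; then $r:=\alpha^4$ is a root of $x^2-2ax+1$ mod $p$, so $r\equiv a\pm b\sqrt m\pmod p$. Because $b^2\leq mb^2=a^2-1<p$, we have $p\nmid b$, which forces $\sqrt m\in\mathbb{F}_p$, giving $(m/p)=1$ and thus $(-m/p)=(-1/p)(m/p)=1$.

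Next I would translate ``$p\nmid a_n$ for every $n$'' into an order condition. Since $p\nmid 4(a^2-1)$, the discriminant of $x^2-2ax+1$ is nonzero mod $p$, so $r,s$ are distinct mod $p$ and Proposition \ref{properties} supplies $2a_n\equiv r^n+s^n\pmod p$ together with $rs\equiv 1$. Hence $p\mid a_n$ iff $r^{2n}\equiv -1\pmod p$, and such $n$ exists iff $-1\in\langle r^2\rangle$. A short computation with $2$-adic valuations of orders in the cyclic group $\mathbb{F}_p^*$ (using that $-1$ is the unique element of order $2$) shows this is equivalent to $4\mid \operatorname{ord}_p(r)$.

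Finally I would use $p\not\equiv 1\pmod{16}$ to defeat this last condition. Setting $E:=\operatorname{ord}_p(\alpha)$, we have $\operatorname{ord}_p(r)=\operatorname{ord}_p(\alpha^4)=E/\gcd(E,4)$; a short case split on $v_2(E)$ shows that $4\mid E/\gcd(E,4)$ precisely when $16\mid E$. But $E\mid p-1$ and $16\nmid p-1$ by hypothesis, so $16\nmid E$, hence $4\nmid \operatorname{ord}_p(r)$, and therefore $p\nmid a_n$ for every $n$. The main point to observe, as opposed to calculate, is that passing from $r$ to its $4$-th root $\alpha$ ``lifts'' the divisibility condition by a factor of $4$: $4\mid\operatorname{ord}_p(\alpha^4)$ is equivalent to $16\mid\operatorname{ord}_p(\alpha)$, which is exactly why the hypothesis involves $16$ rather than $8$.
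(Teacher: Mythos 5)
Your argument is correct, and while it shares the paper's overall strategy (read off the residue conditions from the root structure of $f$ modulo $p$, then use the congruence class of $p$ modulo $16$ to rule out $p\mid a_n$), the decisive step is carried out quite differently. The paper obtains $p\equiv 1\pmod 4$ from the symmetric-function identity $u^2+v^2+u^{-2}+v^{-2}\equiv 0\pmod p$, gets $\left(\frac{-m}{p}\right)=1$ from $(u^4-a)^2=a^2-1=mb^2$, and then handles $p\nmid a_n$ through the rank of apparition $\rho(p)$ and Lemma \ref{rankprime}: writing $p=8k+5$ or $p=16k+9$, it exhibits the explicit odd index $2k+1$ with $p\mid F_{2k+1}$, so that $\rho(p)$ is odd. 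You instead extract $i\in\mathbb{F}_p$ directly as a ratio of two roots, get $\left(\frac{m}{p}\right)=1$ from $\sqrt{m}\in\mathbb{F}_p$, and bypass the sequences $F_n$, $G_n$ and the rank $\rho(p)$ entirely by characterizing ``$p\mid a_n$ for some $n$'' as $-1\in\langle r^2\rangle$, i.e.\ $4\mid\operatorname{ord}_p(r)$, i.e.\ $16\mid\operatorname{ord}_p(\alpha)$, which is impossible since $\operatorname{ord}_p(\alpha)\mid p-1$ and $16\nmid p-1$. Your route is more self-contained and makes transparent why the modulus is $16$ rather than $8$ (taking a fourth root of $r$ multiplies the relevant order condition by $4$); the paper's route stays closer to the classical rank-of-apparition machinery it has already set up and produces an explicit odd $n$ with $p\mid F_n$. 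The only places to tighten your write-up are small: when you assert $r\equiv a\pm b\sqrt m$, what you actually use is $(r-a)^2\equiv a^2-1=mb^2\pmod p$ together with $p\nmid b$, $p\nmid m$ and $r\not\equiv a$, all of which follow from $0<mb^2=a^2-1<p$; and the claim that the root set is closed under $\alpha\mapsto i\alpha$ should be read in $\overline{\mathbb{F}_p}$, with $i\in\mathbb{F}_p$ then deduced because both $\alpha$ and $i\alpha$ are roots lying in $\mathbb{F}_p$ and $\alpha\neq 0$. Neither point is a gap.
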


\begin{proof}
If $x$ is a root of $f(x)$ modulo $p$, then so is $-x$ and $1/x$. Therefore, there exist $u,v \in \mathbb{Z}_p$ such that the roots of $f(x)$ modulo $p$ are $\pm u, \pm v, \pm u^{-1}, \pm v^{-1}$. It follows that 
$$f(x)=(x\pm u)(x \pm v)(x \pm u^{-1})(x \pm v^{-1}),$$
and so $u^2+v^2+u^{-2}+v^{-2}= 0 \pmod p$. Therefore, $u^2(1+(u^{-1}v)^2)(1+(uv)^{-2})=0 \pmod p$. We conclude that either $(u^{-1}v)^2=-1 \pmod p$ or $(uv)^2=-1 \pmod p$. In either case we must have $p=1 \pmod 4$. Moreover, 
$$a^2-1=u^8-2au^4+a^2=(u^4-a)^2,$$
which implies that 
$$\left (\frac{-m}{p} \right )=\left (\frac{-1}{p} \right ) \left (\frac{a^2-1}{p} \right )=1,$$
since $-mb^2=-(1-a^2)$.

To show that $1 + my^2 = (pz)^2$ has no integer solutions, we need to show that $p \nmid a_n$ for all $n\in \mathbb{N}$. Let $r=u^4,s=u^{-4}$. We note that $r,s$ are solutions of equation \eqref{equad} and $rs=1$. Moreover, $r \neq s \pmod p$; otherwise, $u^8=r^2=rs=1 \pmod p$, while $0=f(u)=u^8-2au^4+1=2-2au^4 \pmod p$, which would imply that $u^{-4}=a \pmod p$. It would follow that $1=u^{-8}=a^2 \pmod p$, which would contradict the assumption that $p>a^2$. Hence $r\neq s \pmod p$. Now let $\rho=\rho(p)$ be the rank of $p$. Since $p=1 \pmod 4$ and $p \neq 1 \pmod{16}$, then $p=8k+5$ or $p=16k+9$ for some integer $k$. From identity (i) of Proposition \ref{properties}, one has
 $$(r-s)F_{2k+1}=r^{2k+1}-s^{2k+1}=s^{2k+1}(u^{16k+8}-1)=0 \pmod p,$$
which implies that $p \mid F_{2k+1}$. It follows that $\rho  \mid (2k+1)$, hence $p \nmid a_n$ for all $n\in \natu$ by Lemma \ref{rankprime}. It follows that $p \in \Lambda_m$ by definition. 
\end{proof}

One can use the Frobenius Density Theorem to show that $\Lambda_m$ is infinite. More precisely, we have the following.

\begin{prop}\label{cor55}
There exist infinitely many primes $p\neq 1 \pmod{16}$ such that $f(x)$ splits completely modulo $p$. 
\end{prop}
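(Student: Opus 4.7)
The plan is to apply the Chebotarev density theorem (the weaker Frobenius density theorem would also suffice) to the compositum $KL$, and to use Corollary \ref{corus} to guarantee that the relevant subset of the Galois group is nonempty. Set $G=Gal(KL/\rati)$, $H_K=Gal(KL/K)$, and $H_L=Gal(KL/L)$. Since both $K/\rati$ and $L/\rati$ are Galois, $H_K$ and $H_L$ are normal subgroups of $G$.

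First I would recall two standard facts from algebraic number theory. For a rational prime $p$ that is unramified in $KL$ and coprime to the discriminant of $f(x)$, the prime $p$ splits completely in $K$ if and only if its Frobenius class in $G$ lies in $H_K$, and this is further equivalent to $f(x)$ factoring into eight distinct linear factors modulo $p$. Second, $p$ splits completely in $L=\rati[\zeta_{16}]$ if and only if $p\equiv 1\pmod{16}$, equivalently, the Frobenius class of $p$ lies in $H_L$.

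Next I would invoke Corollary \ref{corus}, which states $L\nsubseteq K$. By the Galois correspondence this translates to $H_K\nsubseteq H_L$, so I can choose some $\sigma\in H_K\setminus H_L$. Because $H_K$ and $H_L$ are both normal in $G$, the entire conjugacy class $[\sigma]$ lies inside $H_K$ and is disjoint from $H_L$. The Chebotarev density theorem then yields that the set of unramified primes whose Frobenius class equals $[\sigma]$ has positive density $|[\sigma]|/|G|$, and in particular is infinite. For each such prime $p$, after excluding the finitely many dividing the discriminant of $f$ or ramifying in $KL$, the polynomial $f(x)$ splits completely modulo $p$ while $p\not\equiv 1\pmod{16}$, as required.

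The main conceptual hurdle, the non-containment $L\nsubseteq K$, has already been overcome in Corollary \ref{corus}; what remains is a routine application of Chebotarev density. The only minor technicality worth verifying is the dictionary between the factorization type of $f(x)$ modulo $p$ and the splitting behavior of $p$ in the splitting field $K$, which is classical whenever $p$ is coprime to the discriminant of $f$.
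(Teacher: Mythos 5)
Your proposal is correct and follows essentially the same route as the paper: the paper simply cites the ``Inclusion Theorem'' for normal extensions (infinitely many primes split completely in $K$ but not in $L$ whenever $L\nsubseteq K$) together with Corollary \ref{corus} and the cyclotomic splitting criterion for $\rati[\zeta_{16}]$, whereas you unfold the proof of that inclusion theorem explicitly via Chebotarev on the compositum $KL$. The substance --- reliance on $L\nsubseteq K$, a density theorem, and the equivalence of $p\equiv 1\pmod{16}$ with complete splitting in $L$ --- is identical.
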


\begin{proof}
Let $K$ and $L$ be as in Corollary \ref{corus}. Then $K$ and $L$ are normal extensions of $\mathbb{Q}$ and therefore, by the Inclusion Theorem (see \cite{Wyman} or Corollary 5.5 of \cite{Janusz}), there exist infinitely many primes that completely split in $K$ but not in $L$. It follows that there exist infinitely many primes $p$ such that $f(x)$ factors into linear factors in $\mathbb{Z}_p$ but $x^8+1$ does not. The statement follows from the Cyclotomic Reciprocity Law, which states that $x^8+1$ splits completely modulo an odd prime $p$ if and only if $p=1 \pmod{16}$. 
\end{proof}

\begin{prop}\label{quotientrank}
$G_m / P_m$ has infinite rank for all $m>1$.
\end{prop}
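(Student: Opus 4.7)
The plan is to produce, for each prime $p\in\Lambda_m$, an element $T_p\in G_m$ together with a detecting homomorphism $\Psi_p\colon G_m\to\inte$ that vanishes on $P_m$, is nonzero on $T_p$, and vanishes on $T_q$ for every $q\in\Lambda_m\setminus\{p\}$. Since $\Lambda_m$ is infinite (by Lemma~\ref{conda2}, Proposition~\ref{cor55}, and the fact that only finitely many primes can violate $p>a^2$), this immediately produces infinitely many $\inte$-linearly independent classes in $G_m/P_m$.

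For each $p\in\Lambda_m$, the hypothesis $(\tfrac{-m}{p})=1$ guarantees that $p$ is unramified and splits in the ring of integers $\mathcal{O}$ of $F_m=\rati[\sqrt{-m}]$ as $p\mathcal{O}=\mathfrak{p}\bar{\mathfrak{p}}$ with $\mathfrak{p}\neq\bar{\mathfrak{p}}$. Let $h_p$ denote the order of $[\mathfrak{p}]$ in the ideal class group of $\mathcal{O}$, pick $\alpha_p\in\mathcal{O}$ with $(\alpha_p)=\mathfrak{p}^{h_p}$, and set $T_p:=[\alpha_p^2]\in G_m$. The squaring ensures that $N(\alpha_p^2)=p^{2h_p}$ is a rational square, so $\alpha_p^2\in\S_m$ and $T_p$ is well-defined. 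Extending the $\mathfrak{p}$- and $\bar{\mathfrak{p}}$-adic valuations to $F_m^{\ast}$, define
$$
\Psi_p([\alpha])=v_{\mathfrak{p}}(\alpha)-v_{\bar{\mathfrak{p}}}(\alpha).
$$
Because $v_{\mathfrak{p}}(q)=v_p(q)=v_{\bar{\mathfrak{p}}}(q)$ for every $q\in\rati^{\ast}$, this difference vanishes on $\rati^{\ast}$ and hence descends to a group homomorphism $G_m=\S_m/\rati^{\ast}\to\inte$.

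Three short verifications then complete the proof. First, $\Psi_p$ kills $P_m$: each Pell generator $1+b_n\sqrt{-m}$ has norm $1+mb_n^2=a_n^2$, and the defining property of $\Lambda_m$ states precisely that $p\nmid a_n$ for every $n\in\natu$, so $v_{\mathfrak{p}}(1+b_n\sqrt{-m})=v_{\bar{\mathfrak{p}}}(1+b_n\sqrt{-m})=0$. Second, $\Psi_p(T_p)=2h_p\neq 0$ by construction. Third, for $q\in\Lambda_m\setminus\{p\}$ the norm of $\alpha_q^2$ is a power of $q\neq p$, so neither $\mathfrak{p}$ nor $\bar{\mathfrak{p}}$ enters its factorization and $\Psi_p(T_q)=0$. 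Consequently, if $\sum_{p\in\Omega}c_pT_p\in P_m$ for a finite $\Omega\subset\Lambda_m$ and integers $c_p$, then applying $\Psi_q$ for each $q\in\Omega$ gives $2h_qc_q=0$, forcing $c_q=0$.

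The main technical hurdle is checking that $\Psi_p$ genuinely descends to $G_m$; this reduces to the identity $v_{\mathfrak{p}}(q)=v_{\bar{\mathfrak{p}}}(q)$ for $q\in\rati^{\ast}$, which holds because $\mathfrak{p}$ and $\bar{\mathfrak{p}}$ are conjugate primes above the same rational prime with equal residue degree and ramification index. A minor subsidiary point is that $\alpha_p$ is unique only up to a unit of $\mathcal{O}$, but units have trivial valuation at $\mathfrak{p}$ and $\bar{\mathfrak{p}}$, so $\Psi_p(T_p)=2h_p$ is independent of the choice.
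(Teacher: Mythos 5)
Your proof is correct, and it takes a genuinely different route from the paper's. Both arguments rest on the same two inputs --- the infinitude of $\Lambda_m$ (Lemma \ref{conda2} together with Proposition \ref{cor55}, plus your correct remark that only finitely many primes can fail $p>a^2$) and, for $p\in\Lambda_m$, the facts that $p$ splits in $\O_{F_m}$ (from $(\frac{-m}{p})=1$) and that $p\nmid a_n$ for all $n$ --- and both ultimately hang an independent class on a power of a prime $\mathfrak p$ above $p$. The difference is in how independence is certified. The paper constructs an explicit primitive triple $[x_p,y_p,z_p]$ whose third entry has $p$ as its only odd prime factor (via the representations $p^h=u^2+mv^2$ or the half-integral analogue from Theorem 12.7.1 of Alaca--Williams), and then runs a hands-on argument: Lemma \ref{primitive} controls the gcd's arising when triples are added, so $p$ survives into the third component of any nontrivial combination, and membership of that combination in $P_m$ would force $p\mid a_n$ for some $n$, contradicting $p\in\Lambda_m$. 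You instead build a detecting functional $\Psi_p=v_{\mathfrak p}-v_{\bar{\mathfrak p}}\colon G_m\to\inte$; it descends to $G_m$ because the valuation difference kills $\rati^{*}$, it kills $P_m$ because each Pell generator $1+b_n\sqrt{-m}$ is an algebraic integer whose norm $a_n^2$ is prime to $p$ (forcing both valuations to vanish), and it takes the value $2h_p\neq 0$ on $T_p=[\alpha_p^2]$ and $0$ on $T_q$ for $q\neq p$, so linear independence of the cosets drops out by applying the functionals. This bypasses entirely the primitivity analysis of Lemma \ref{primitive} and the congruence-class case distinctions in the construction of the triples; what it costs is a slightly heavier reliance on ideal-theoretic language (splitting of primes, extensions of valuations), whereas the paper stays closer to explicit integer triples. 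Two small points you rightly attend to and that are genuinely needed: $T_p$ lies in $G_m$ only because the norm $p^{2h_p}$ of $\alpha_p^2$ is a rational square, and $\Psi_p(T_p)=2h_p\neq 0$ uses $\mathfrak p\neq\bar{\mathfrak p}$, i.e.\ that $p$ is split rather than ramified.
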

\begin{proof}
Lemma \ref{conda2} and Proposition \ref{cor55} show that $|\Lambda_m|=\infty$. 
Using the ideal class number $h$ of the field $\rati[\sqrt{-m}]$, for each 
$p\in\Lambda_m$, one obtains positive integers $k$, $x$, and $y$ such that $\gcd(x,y)=1$, and 
$x^2 + my^2 = (2^{\delta}p^k)^2$ with $\delta =  0 ~ \mbox{or} ~ 1$. To be more precise, if 
$-m = 5 \pmod 8$, Theorem 12.7.1 of \cite{Alaca} 
provides coprime integers $u$ and $v$ such that 
\begin{equation}\label{ph}
p^h = u^2 + uv +\frac{1}{4}(m+1)v^2 = u^2 + uv + \left(\frac{v}{2}\right)^2 + m\left(\frac{v}{2}\right)^2,
\end{equation}
and hence, if $h = 2t$ we have either $
p^{2t} = (u + v/2)^2 + m(v/2)^2$ or $(2p^t)^2 = (2u + v)^2 + mv^2,
$
depending on whether $v$ is even or odd respectively. If $h$ is odd, squaring equation \eqref{ph} yields:
$$
p^{2h} = \left( (u + v/2)^2 - m\left(\frac{v}{2}\right)^2\right)^2 + m \left(uv + \frac{v^2}{2}\right)^2,
$$
and if $v$ is odd, then $(2p^h)^2 = ( (2u+v)^2-mv^2)^2 + m(2uv + v^2)^2.
$
Note that it is possible to have $k< h/2$ when $h$ is even, or $k < h$ when $h$ is odd because of possible common factors among the components of the triples obtained above. For the other congruence classes of $m$, Theorem 12.7.1 gives coprime integers $u$ and $v$ such that 
$p^h = u^2 + mv^2$, and similarly we get an element of $G_m$ by squaring this equation if $h$ is odd. 

So far, for each prime $p\in \Lambda_m$, we have a primitive triple $[x_p,y_p,z_p]$ such that the only odd prime factor of $z_p$ is $p$. We claim that the cosets $P_m+[x_p,y_p,z_p]$ in $G_m/P_m$ are linearly independent over $\mathbb{Z}$. On the contrary, suppose there exists a finite set $\Omega \subseteq \Lambda_m$ and nonzero integers $k_p$, $p \in \Omega$, such that 
$$\sum_{p\in \Omega} k_p[x_p,y_p,z_p]  \in P_m.$$
Note that the third component of the primitive representation of $k_p[x_p,y_p,z_p]$ is divisible by $p$, since otherwise $[x_p,y_p,z_p]$ would have to be an element of finite order which is impossible except if $[x_p,y_p,z_p]=[1,1,2] \in P_3$; however, this contradicts $p \mid z_p$. 

Let $[x_0,y_0,z_0]$ be the primitive representation of $\sum_{p\in \Omega} k_p[x_p,y_p,z_p]$. Since 2 is the only possible common divisor of $z_{p_1}$ and $z_{p_2}$ for distinct $p_1,~p_2\in \Omega$, Lemma \ref{primitive} implies that $z_0$ is divisible by $p$ for all $p\in \Omega$. Since $[x_0,y_0,z_0] \in P_m$, there exist triples $[1,y^\prime_i, x^\prime_i] \in G_m$ and nonzero integers $l_i$, $i=1,\ldots, k$, such that the primitive representation of $\sum_{i=1}^k l_i[1,y^\prime_i,x^\prime_i]$ is $[x_0,y_0,z_0]$. Since $p \mid z_0$, we must have $p \mid x^\prime_i$ for some $1\leq i \leq k$. But then the equation $1+my^2=(pz)^2$ has the solution $y=y_i^\prime$ and $z=x_i^\prime/p$, which contradicts $p\in \Lambda_m$. Therefore, the cosets $P_m+[x_p,y_p,z_p]$, $p\in \Lambda_m$, are linearly independent over $\mathbb{Z}$. Since $|\Lambda_m|=\infty$, we conclude that the rank of $G_m/P_m$ is infinite. 
\end{proof}


\section{Examples of nontrivial torsion}

In this section, we show examples of $m$ for which $G_m/P_m$ has nontrivial torsion. We construct a homomorphism $f_m:G_m\to Cl(F_m)$, 
where $Cl(F_m)$ stands for the ideal class group of the imaginary quadratic field $F_m=\rati[\sqrt{-m}]$. 

\begin{definition}
Let $[a,b,c]$ represent a primitive triple in $G_m$. Let
$$
f_m([a,b,c])=\left [ \left \langle c,~a+b\sqrt{-m} \right \rangle \right ],
$$
where the integral ideal $\left \le c,~a+b\sqrt{-m} \right \re$ is the set of all linear combinations of $c$ and $a+b\sqrt{-m}$ with coefficients in $\O_{F_m}$ 
(the ring of integers in $F_m$), and
$\left [ \left \le c,~a+b\sqrt{-m} \right \re \right ]$ is the corresponding ideal class in the ideal class group $Cl(F_m)$.
\end{definition}

\noindent Notice that $f_m$ is well-defined, since $\left \langle c,~a+b\sqrt{-m} \right \rangle = \left  \langle c, -a-b\sqrt{-m}\right \rangle$. 

\begin{prop}
The map
$
f_m:G_m\to Cl(F_m)$ is a group homomorphism.
\end{prop}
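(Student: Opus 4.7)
Plan: I plan to verify the homomorphism property $f_m(T_1+T_2) = f_m(T_1)\cdot f_m(T_2)$ by direct computation. For primitive triples $T_1 = [a,b,c]$ and $T_2 = [x,y,z]$, set $\alpha = a+b\sqrt{-m}$ and $\beta = x+y\sqrt{-m}$; then $N(\alpha) = c^2$, $N(\beta) = z^2$, and $\alpha\beta = (ax-myb)+(ay+bx)\sqrt{-m}$. If $g = \gcd(ax-myb,\,ay+bx,\,cz)$, the group law gives $T_1+T_2$ as the primitive triple corresponding to $\alpha\beta/g$, and since $(1/g)$ is a principal fractional ideal in $F_m$, the identity $\langle cz/g,\,\alpha\beta/g\rangle = (1/g)\langle cz,\,\alpha\beta\rangle$ yields
\[
f_m(T_1+T_2) \;=\; [\langle cz,\,\alpha\beta\rangle]\quad\text{in}\quad Cl(F_m).
\]
On the other hand, direct expansion gives $f_m(T_1)f_m(T_2) = [\langle c,\alpha\rangle\langle z,\beta\rangle] = [\langle cz,\,c\beta,\,z\alpha,\,\alpha\beta\rangle]$, so the proposition reduces to the class-group identity
\[
[\langle cz,\,\alpha\beta\rangle] \;=\; [\langle cz,\,c\beta,\,z\alpha,\,\alpha\beta\rangle].
\]

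From the inclusion $\langle cz,\,\alpha\beta\rangle \subseteq \langle cz,\,c\beta,\,z\alpha,\,\alpha\beta\rangle$, there is a unique integral ideal $M$ with $\langle cz,\,\alpha\beta\rangle = M\cdot\langle cz,\,c\beta,\,z\alpha,\,\alpha\beta\rangle$, and it suffices to show that $M$ is principal. I would verify this by a prime-by-prime valuation analysis: at each prime $\mathfrak p$ of $\mathcal O_{F_m}$ above a rational prime $p$,
\[
v_{\mathfrak p}(M) \;=\; \min\!\bigl(v_{\mathfrak p}(cz),\,v_{\mathfrak p}(\alpha\beta)\bigr) \;-\; \min\!\bigl(v_{\mathfrak p}(cz),\,v_{\mathfrak p}(c\beta),\,v_{\mathfrak p}(z\alpha),\,v_{\mathfrak p}(\alpha\beta)\bigr).
\]
If $p \nmid c$ or $p \nmid z$, the four valuations collapse and $v_{\mathfrak p}(M) = 0$. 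If $p$ divides both $c$ and $z$ but $\mathfrak p$ is inert or ramified in $F_m$, then $\alpha\bar\alpha = c^2$, $\beta\bar\beta = z^2$, and Galois-stability of $\mathfrak p$ force $v_{\mathfrak p}(\alpha) = v_{\mathfrak p}(c)$ and $v_{\mathfrak p}(\beta) = v_{\mathfrak p}(z)$, so all four valuations on the right-hand side coincide and again $v_{\mathfrak p}(M) = 0$.

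The only interesting case is when $p$ splits as $\mathfrak p\bar{\mathfrak p}$ in $F_m$ and divides both $c$ and $z$. Writing $v_{\mathfrak p}(\alpha) = v_p(c) + k$ and $v_{\mathfrak p}(\beta) = v_p(z) + k'$ (so that $v_{\bar{\mathfrak p}}(\alpha) = v_p(c) - k$ and $v_{\bar{\mathfrak p}}(\beta) = v_p(z) - k'$), a short case check on the signs of $k, k'$ yields $v_{\mathfrak p}(M) = v_{\bar{\mathfrak p}}(M) = \min(|k|,|k'|)$ when $k$ and $k'$ have opposite signs, and $0$ otherwise. The local contribution to $M$ at such a split $p$ is therefore $\mathfrak p^e\bar{\mathfrak p}^e = (p^e)$ with $e = \min(|k|,|k'|)$, which is a principal ideal of $\mathcal O_{F_m}$. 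Aggregating over all primes yields $M = (d)$ for some positive rational integer $d$, so $M$ is principal and the desired class-group identity holds. The main obstacle will be the split-prime case analysis: while elementary, it requires careful tracking of how the minimum of four valuations behaves as $k$ and $k'$ vary in sign and magnitude. (As a sanity check, the resulting $d$ should be compatible with the constraint $g = d^2 2^{\epsilon}$ from Lemma \ref{primitive}(ii).)
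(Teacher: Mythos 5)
Your proof is correct, and it takes a genuinely different route from the paper's. The paper reduces the claim to showing that the triple product $\langle c,\alpha\rangle\,\langle z,\beta\rangle\,\langle cz,\bar\alpha\bar\beta\rangle$ is principal (the third factor being the conjugate, hence inverse class, of $\langle cz,\alpha\beta\rangle$), and proves this by explicit manipulation of generators: the product is written as $\langle cz\rangle I$, the integer $d=\gcd(c,z,ax-mby)$ is factored out, and the residual ideal $I_1$ is shown by hand to be $\mathcal{O}_{F_m}$ or $\langle 2\rangle$, with a case split on $m\bmod 4$ to deal with the prime $2$ when the ring of integers is $\mathbb{Z}[(1+\sqrt{-m})/2]$. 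You instead compare $\langle cz,\alpha\beta\rangle$ with $\langle c,\alpha\rangle\langle z,\beta\rangle=\langle cz,c\beta,z\alpha,\alpha\beta\rangle$ directly and compute the quotient ideal $M$ locally at every prime, using $N(\alpha)=c^2$, $N(\beta)=z^2$ to pin down the valuations; your key computation --- $v_{\mathfrak p}(M)=v_{\bar{\mathfrak p}}(M)=\min(|k|,|k'|)$ at split primes with $k,k'$ of opposite sign, and $0$ in every other case --- checks out, and it forces $M$ to be generated by a rational integer, hence principal. Your argument buys a cleaner proof that avoids both the generator juggling and the $m\bmod 4$ case analysis (the half-integral ring of integers causes no trouble for a valuation argument), at the cost of invoking unique factorization of ideals and ``to contain is to divide'' in a Dedekind domain; the paper's computation is more elementary and exhibits the principal generator concretely. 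Both treatments handle the passage to the primitive representative identically, by observing that dividing the generators by $g$ changes the ideal only by the principal fractional ideal $(1/g)$.
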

\begin{proof}
Suppose that $[a,b,c]+[x,y,z]=[\alpha,\beta,\gamma]$, 
where $\gcd(\alpha,\beta,\gamma)=1$. Since 
$$\left [ \left \le cz,~(ax-mby) + (ay+bx)\sqrt{-m}\right \re \right ] = \left [ \left \le \gamma,~\alpha+\beta\sqrt{-m}\right \re \right ],$$
the statement follows if we show that the product of ideals 
\begin{equation}
\label{product}
\left \le c,~a+b\sqrt{-m}\right \re \left  \le z,~x+y\sqrt{-m} \right \re \left \le cz,~(a-b\sqrt{-m})(x-y\sqrt{-m}) \right \re
\end{equation}
is a principal ideal of $\O_{F_m}$. The ideal $\left \le cz,~(a-b\sqrt{-m})(x-y\sqrt{-m})\right \re$ is the conjugate of  
$\left \le cz,~(ax-mby) + (ay+bx)\sqrt{-m} \right \re$, hence its class is the inverse of $\left [ \left \le \gamma,~\alpha+\beta\sqrt{-m} \right \re \right ]$ in the ideal class group. A straightforward 
computation shows that the product in \eqref{product} can be written as $\le cz\re I$, where the ideal $I$ is  
\begin{equation}\nonumber
\left \le cz, 2za, 2cx, 2(ax-mby),z(a+b\sqrt{-m}), c(x+y\sqrt{-m}), (ax-mby)+(ay+bx)\sqrt{-m} \right  \re,
\end{equation}
as linear combinations with coefficients in $\O_{F_m}$. Since $\gcd(c,a)=\gcd(z,x)=1$, we have $2z= (cz)2n_1 + (2az)n_2$ for some $n_1,n_2\in\inte$, which allows us to replace $2za$ by $2z$ in the presentation of $I$. Similarly, we can replace $2cx$ by $2c$. Let $d= \gcd(c,z,ax-mby)$ so that for some integers $N_1, N_2,$ and $N_3$, we have
\begin{equation}
\label{combination}
2d = 2cN_1 + 2zN_2 +2(ax-mby)N_3.
\end{equation}
Since $(ax-mby)^2 + m(ay+bx)^2 = (cz)^2$ and $\gcd(a,c)=1$, one has $d \mid (ay+bx)$, and so $I=\le d \re I_1$, where \begin{equation}
\label{I2}
I_1= \left \le \frac{cz}{d},2,\frac{z}{d}(a+b\sqrt{-m}),\frac{c}{d}(x+y\sqrt{-m}),\frac{ax-mby}{d} + \frac{ay+bx}{d}\sqrt{-m} \right \re.
\end{equation}
If $m= 1,2\pmod{4}$, then both $c$ and $z$ are odd. Therefore, 1 is a linear combination of $cz/d$ and $2$, which implies that $I=\le d\re $ and the proof is completed in this case. Thus, suppose that $m= 3 \pmod{4}$. By subtracting certain multiples of 2 from the generators of $I_1$, we can rewrite $I_1$ in one of the following forms.
$$
I_1 =
\begin{cases}
\le 2 \re \\
\le 2,~1\re  = \O_{F_m}\\
\le 2,~\sqrt{-m}\re  = \O_{F_m}\\
\le 2,~1+\sqrt{-m} \re =  \le 2\re \le 1,~\frac{1+\sqrt{-m}}{2}\re=\le 2\re \\
\end{cases}
$$
In each of these cases, $I$ is a principal ideal and the proposition follows.
\end{proof}

\begin{remark}
Since $\le c\re \left \le c,~a+b\sqrt{-m} \right \re = \left \le a+b\sqrt{-m} \right \re \left \le c,~a-b\sqrt{-m} \right \re$, it follows that 
$f_m([a,b,c])=f_m([a,-b,c])$ and the image of $f_m$ consists of elements of order 2 (these are the so-called ambiguous classes). 
In particular, the homomorphism $f_m$ in general is not onto.
\end{remark}

The next lemma shows that $f_m$ gives an invariant for the elements of the quotient group $G_m/P_m$ for certain values of $m$.

\begin{lemma}\label{homom}
Let $m$ be a positive square-free integer such that the ring of integers of the real quadratic 
field $\rati[\sqrt{m}]$ contains units of norm $-1$. Then 
$$
P_m\subseteq \ker(f_m).
$$
\end{lemma}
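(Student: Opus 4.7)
The plan is to show that for every solution $(X,Y)$ of Pell's equation $X^2 - mY^2 = 1$, the ideal $\mathfrak{a} := \langle X,\, 1 + Y\sqrt{-m}\rangle$ is principal in $\mathcal{O}_{F_m}$; since such triples generate $P_m$, this establishes the lemma. The strategy is to use the hypothesis to realize $X + Y\sqrt{m}$ as a square in $\mathbb{Z}[\sqrt{m}]$, and then to transfer the factorization to $\mathbb{Z}[\sqrt{-m}] \subseteq \mathcal{O}_{F_m}$ via the substitution $\sqrt{m} \mapsto \sqrt{-m}$.

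By the equivalence (ii)$\Leftrightarrow$(iii) already proved in Proposition \ref{polynom}, $\mathbb{Z}[\sqrt{m}]$ contains a fundamental unit $\delta$ of norm $-1$, so every positive unit of norm $+1$ is an even power of $\delta$. Consequently $X + Y\sqrt{m} = \tau^2$ for some $\tau = s + t\sqrt{m} \in \mathbb{Z}[\sqrt{m}]$, which gives
\[
X = s^2 + mt^2, \qquad Y = 2st, \qquad s^2 - mt^2 = \pm 1.
\]

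Back in $\mathcal{O}_{F_m}$, the elements $s \pm t\sqrt{-m} \in \mathbb{Z}[\sqrt{-m}]$ satisfy $(s+t\sqrt{-m})(s-t\sqrt{-m}) = X$ and $(s \pm t\sqrt{-m})^2 = (s^2-mt^2) \pm Y\sqrt{-m}$, with linked signs. Hence one of $\langle 1 \pm Y\sqrt{-m}\rangle$ equals $\langle s + t\sqrt{-m}\rangle^2$; conjugating if necessary, $\langle 1 + Y\sqrt{-m}\rangle$ itself is the square of one of $\langle s + t\sqrt{-m}\rangle$ or $\langle s - t\sqrt{-m}\rangle$. Combining with $\langle X\rangle = \langle s + t\sqrt{-m}\rangle\langle s - t\sqrt{-m}\rangle$ yields
\[
\mathfrak{a} \;=\; \langle s \pm t\sqrt{-m}\rangle \cdot \mathfrak{b}, \qquad \mathfrak{b} := \langle s + t\sqrt{-m},\, s - t\sqrt{-m}\rangle,
\]
with the sign in the first factor chosen to match that of $s^2 - mt^2$. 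Finally, the direct identity
\[
s\,(s - t\sqrt{-m}) + t\sqrt{-m}\,(s + t\sqrt{-m}) = s^2 - mt^2 = \pm 1
\]
exhibits a unit of $\mathcal{O}_{F_m}$ inside $\mathfrak{b}$, forcing $\mathfrak{b} = \mathcal{O}_{F_m}$ and hence $\mathfrak{a}$ principal. Thus $f_m([1,Y,X]) = 1$ in $Cl(F_m)$, so $P_m \subseteq \ker(f_m)$. The only real bookkeeping point is the sign dichotomy $s^2 - mt^2 = \pm 1$, which splits the ideal computation into two subcases, but the $\sqrt{-m} \leftrightarrow -\sqrt{-m}$ symmetry makes them structurally identical.
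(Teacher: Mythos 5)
Your proof is correct and follows essentially the same route as the paper: both use the norm $-1$ unit to write each Pell solution as $z+y\sqrt{m}=(s+t\sqrt{m})^2$ with $s^2-mt^2=\pm 1$, and then factor $\langle z,\,1+y\sqrt{-m}\rangle$ as a principal ideal times the unit ideal $\langle s+t\sqrt{-m},\,s-t\sqrt{-m}\rangle=\langle 2s,\,s+t\sqrt{-m}\rangle=\O_{F_m}$. The only cosmetic difference is that the paper extracts the square root via the explicit powers $(a_0+b_0\sqrt{m})^{2n}$ from Niven's theorems rather than via the fundamental unit.
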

\begin{proof}
Let $(a_0,b_0)$ be the least positive solution of 
\begin{equation}\label{pell2}
x^2 - my^2 = -1.
\end{equation}
 According to theorems 7.25 and 7.26 of \cite{Niven}, for every generator $[1,y,z]$ of $P_m$, one has $$
z+y\sqrt{m} = a_{2n} + b_{2n}\sqrt{m} = (a_0+b_0\sqrt{m})^{2n},$$
for some $n\in \natu$. Therefore, $z = a_n^2+mb_n^2$, $y = 2a_nb_n$. It follows from $a_n^2 - mb_n^2=\pm 1$ that
\begin{eqnarray}\nonumber
\left \le z,1+y\sqrt{-m} \right \re &=& \left  \le a_n^2+mb_n^2,1+2a_nb_n\sqrt{-m} \right \re \\ \nonumber
& = & \left  \le (a_n+b_n\sqrt{-m})(a_n-b_n\sqrt{-m}),(a_n \pm b_n \sqrt{-m})^2 \right \re \\ \nonumber
&=& \le a_n\pm b_n\sqrt{-m}\re \le 2a_n,a_n + b_n\sqrt{-m}\re  \\ \nonumber
&=& \le a_n\pm b_n\sqrt{-m}\re  \O_{F_m},
\end{eqnarray}
where the last equality follows from 
$$
(a_n + b_n\sqrt{-m})^2- (2a_n)(b_n\sqrt{-m}) = a_n^2-mb_n^2 = \pm 1.
$$
Therefore, $f_m$ maps every generator of $P_m$ to a principal ideal of $\O_{F_m}$, and the lemma follows.
\end{proof}

 Existence of units of norm $-1$ in Lemma \ref{homom} is crucial as shown by the next  example.

\begin{example}
Let $m=6$ and $[1,2,5]\in P_6$. The group $Cl(F_6)\cong \inte/2\inte$ is generated by the class of ideal $\le 2,~\sqrt{-6}\re$ (see \cite[Section 12.6]{Alaca}). Since $f_6([1,2,5]) = [\le 5,~1+2\sqrt{-6}\re] = [\le 5,~2 - \sqrt{-6}\re]$ and 
$$
\le 2\re\le 5,2 - \sqrt{-6}\re = \le 2- \sqrt{-6}\re\le 2,2 + \sqrt{-6}\re = \le 2-\sqrt{-6}\re \le 2,\sqrt{-6}\re,
$$
we see that $[1,2,5]\notin \ker(f_6)$.
\end{example}

Next, we show that $G_5/P_5$ has nontrivial 2-torsion.

\begin{example}\label{exgen}
For $m=5$, the least solution of \eqref{pell2} is given by $(a_0,b_0) = (2,1)$. We have $[2,1,3]\in G_5$ and $[2,1,3] + [2,1,3] = [-1,4,9]\in P_5$. On the other hand, $f_5([2,1,3]) = [\le 3,2+\sqrt{-5}\re]$, $\le 3,2+\sqrt{-5}\re = \le 3,1-\sqrt{-5}\re$ and 
$\le 2\re\le  3,1-\sqrt{-5}\re = \le 1-\sqrt{-5}\re\le 2,1+\sqrt{-5}\re$ we have $f_5([2,1,3]) = [\le 2,1+\sqrt{-5}\re]$.
The ideal class $[\le 2,1+\sqrt{-5}\re]$ is a generator of the ideal class group of $\rati[\sqrt{-5}]$ (see section \cite[Section 12.6]{Alaca}) and hence by the lemma above $[2,1,3]\notin P_5$. It follows that the class $[2,1,3] +P_5\in G_5/P_5$ is a nontrivial element of order 2.
\end{example}

Example \ref{exgen} can be generalized as follows.

\begin{lemma}\label{np}
Suppose there exists a primitive triple $[a,b,c]\in G_m$ such that $m>c$ and $a^2 - mb^2 = -1$. 
Then the ideal $\le c,~a+b\sqrt{-m}\re$ is not principal.
\end{lemma}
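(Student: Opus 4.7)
The plan is to compute the norm $N(I) = c$ of the ideal $I = \langle c, a+b\sqrt{-m}\rangle$ and then exploit the smallness hypothesis $m > c$ to rule out any principal generator. First, I would show $I\bar I = \langle c\rangle$. Expanding the product yields $I\bar I = \langle c\rangle J$, where $J = \langle c, a+b\sqrt{-m}, a-b\sqrt{-m}\rangle$; this $J$ contains $c$, $2a$, and $2b\sqrt{-m}$. Adding the equations $a^2 + mb^2 = c^2$ and $a^2 - mb^2 = -1$ gives $c^2 = 2a^2 + 1$, so $c$ is odd, while a common prime divisor of $a$ and $c$ would divide $mb^2$ and, using that $m$ is squarefree, would then divide $b$, contradicting $\gcd(a,b) = 1$ (a consequence of $a^2 - mb^2 = -1$). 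Hence $\gcd(2a,c) = 1$, so $1 \in J$, giving $J = \mathcal{O}_{F_m}$ and $N(I) = c$.

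Next, I would observe that the same hypothesis $a^2 - mb^2 = -1$ forces $m \equiv 1$ or $2 \pmod 4$: reducing modulo $4$, one checks that for $m \equiv 3 \pmod 4$ the left side lies in $\{0,1,2\}$, while $m \equiv 0 \pmod 4$ is ruled out by squarefreeness. Consequently $\mathcal{O}_{F_m} = \mathbb{Z}[\sqrt{-m}]$, so any candidate generator of $I$ has the form $\alpha = u + v\sqrt{-m}$ with $u, v \in \mathbb{Z}$.

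Finally, if $I = \langle \alpha\rangle$ with $\alpha = u + v\sqrt{-m}$, then $N(\alpha) = u^2 + mv^2 = c$. The bound $c < m$ forces $v = 0$, so $\alpha = u$ is a rational integer with $u^2 = c$. Then $u \mid a + b\sqrt{-m}$ gives $u \mid a$ and $u \mid b$, and $\gcd(a,b) = 1$ forces $u = \pm 1$ and $c = 1$, which is incompatible with $a^2 + mb^2 = 1$ together with $a^2 - mb^2 = -1$ for $m > 1$. The main obstacle is the first step, where the identification $N(I) = c$ requires a careful parity and coprimality bookkeeping; once that norm is in place, the smallness $c < m$ immediately delivers the contradiction.
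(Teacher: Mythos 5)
Your proof is correct and follows the same strategy as the paper's: establish that the ideal $I=\le c,\,a+b\sqrt{-m}\re$ has norm $c$, use $a^2-mb^2=-1$ to rule out $m\equiv 3\pmod 4$ so that any generator is $u+v\sqrt{-m}$ with $u,v\in\inte$, and then let $c<m$ force $v=0$ and a contradiction with primitivity. The only (harmless) divergence is in how $N(I)=c$ is obtained — you compute $I\bar I=\le c\re$ directly, whereas the paper exhibits the integral basis $\{c,\,a\beta+\sqrt{-m}\}$ — and your treatment of the residual case $v=0$ is in fact slightly more explicit than the paper's.
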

\begin{proof}
Suppose on the contrary that there exists $z\in\O_{F_m}$ such that $ \le c,~a+b\sqrt{-m}\re = \le z\re$. It follows from the assumptions that $c^2 = 2a^2+1$, and so $a$ must be even and $m= 1\pmod{4}$. Therefore, there exist $x,y\in\inte$ such that $z=x+y\sqrt{-m}$. Choose $\gamma,\beta\in\inte$ such that $c\gamma+b\beta=1$. Then $
\le c,a + b\sqrt{-m}\re = \le c,a\beta + \sqrt{-m}\re$ with $b^2(a^2\beta^2+m) = c L$ for some $L\in\inte$. 
It means that the ideal $\le c,~a + b\sqrt{-m}\re$ has $\{c,~a\beta + \sqrt{-m}\}$ as an integral basis (see\cite[Section 6.5]{Alaca}). Using the norm, we see that
$$x^2+my^2=N\left ( \left \le x+y\sqrt{-m} \right \re \right )=N \left ( \left \le c,~a + b\sqrt{-m} \right \re \right ) = c,$$ 
by \cite[Theorem 7.1.5]{Alaca}. This contradicts the given condition $m> c$ (primitivity of $[a,b,c]$ implies $y\neq 0$).
\end{proof}

As a corollary to Lemmas \ref{homom} and \ref{np}, in Table \ref{table2}, several examples of $m$ and primitive triples $[a,b,c]+P_m\in G_m/P_m$ of order two are given.
{
\renewcommand{\arraystretch}{2}
\begin{table}
\begin{center}
\begin{tabular}{||c|c|c|c||}
\hline
m & a & b & c \\
\hline
5 & 2 & 1 & 3 \\
\hline
145 & 12 & 1 & 17 \\
\hline
985 & 408 & 13 & 577 \\
\hline
5654885 & 2378 &1 & 3363\\
\hline
192099601 &13860 &1 &19601 \\
\hline
261029261 & 80782 &5 &114243 \\
\hline
\end{tabular}
\end{center}
\caption{}
\label{table2}
\end{table}
}
\begin{remark}\label{spex}
The triple $[70,13,99]\in G_{29}$ dos not satisfy the requirement $m> c$ of Lemma \ref{np};  however, since 
$99 = x^2 + 29y^2$ has no integer solutions, the proof of Lemma \ref{np}. implies that 
$[70,13,99]+P_{29}$ has order two in $G_{29}/P_{29}$. 
\end{remark}

Examples in Table \ref{table2} are found by searching through the decreasing convergents $h_n/k_n$, 
$n\in \natu$, of the continued fraction 
expansion of $\sqrt{2}$. Suppose $a^2-mb^2=-1$ and $a^2+mb^2=c^2$, then $c^2-2a^2=1$. Moreover, $\gcd(c,a)=1$ and $c$ is odd, hence $\gcd(c-a,c+a)=1$. 
It follows from the recursive formulas for the numerators and denominators of the 
convergents $h_n/k_n$ that $c\pm a$ are consecutive denominators of 
the increasing convergents of $\sqrt{2}$ i.e., $c-a=k_{2s-2}$ and $c+a = k_{2s}$ for some $s\in \natu$. 
Therefore, if there are infinitely many square-free denominators $k_{2s}$ of $\sqrt{2}$, 
one concludes that there are infinitely many $m$ for 
which the conditions of Lemma \ref{np} are met, since by letting $m$ equal the square free part of $k_{2s-2} k_{2s}$, we have $m \geq k_{2s} >c$. Hence the infinitude of square-free denominators $k_{2s}$ 
would imply the infinitude of the quotient groups $G_m/P_m$ with nontrivial 2-torsion.

\begin{remark}
Denominators $k_{2s},~s\geq 0$, of increasing convergents of $\sqrt{2}$ satisfy the recurrence 
relation $k_{2(s+1)} = 6k_{2s} - k_{2s-2}$, $s\in\natu$, with $k_0=1,~k_2=5$. It is a difficult problem in general to determine if solutions of second-order linear difference equation have infinitely many square-free terms. For example, the existence of infinitely many square-free Fibonacci 
numbers follows from nonexistence of Wall-Sun-Sun primes, which is an open problem; \cite{Sun}. 
\end{remark}

It is also worth mentioning that the example in Remark \ref{spex} corresponds to 
three consecutive convergents of $\sqrt{2}$ 
$$
\frac{41}{29},~\frac{99}{70},~\frac{239}{169}~~\mbox{with}~~99^2 - 70^2 = 29 13^2 ~~
\mbox{and}~~239^2 - 2 169^2 = -1.
$$
It is well known that the equation $z^2 - 2 y^4 = -1$ has no solutions in positive integers other than 
$(z,y)=(1,1)$ and $(239,13)$; \cite{Lj}. Therefore, it is natural to suspect that the example of 
$[70,13,99]\in G_{29}$ is the only example from the sequence of decreasing convergents of $\sqrt{2}$, 
where the condition $ m > c $ of Lemma \ref{np} is violated.

\end{document}